\newcommand{\keywords}[1]{\textbf{Keywords:} #1}
\newtheorem{theorem}{Theorem}[section]
\newtheorem{proposition}{Proposition}[section]
\newtheorem{lemma}{Lemma}[section]
\newtheorem{remark}{Remark}
\newtheorem{corollary}{Corollary}[section]
\newtheorem{example}{Example}[section]
\title{Periodicity in Hedge-myopic system and an asymmetric NE-solving paradigm for two-player zero-sum games}
\author[a,b]{Xinxiang Guo}
\author[b]{Yifen Mu\thanks{Corresponding author: mu@amss.ac.cn}}
\author[c]{Xiaoguang Yang}
\affil[a]{School of Mathematical Sciences, University of Chinese Academy of Sciences}
\affil[b]{The Key Laboratory of Systems and Control, Institute of Systems Science, Academy of Mathematics and Systems Science, Chinese Academy of Sciences}
\affil[c]{Key Laboratory of Management, Decision and Information System, Institute of Systems Science, Academy of Mathematics and Systems Science,
Chinese Academy of Sciences}
\date{}
\begin{document}
\maketitle

\begin{abstract}
     In this paper, we consider the $n \times n$ two-payer zero-sum repeated game in which one player (player X) employs the popular Hedge (also called multiplicative weights update) learning algorithm while the other player (player Y) adopts the myopic best response. We investigate the dynamics of such Hedge-myopic system by defining a metric $Q(\textbf{x}_t)$, which measures the distance between the stage strategy $\textbf{x}_t$ and Nash Equilibrium (NE) strategy of player X. We analyze the trend of $Q(\textbf{x}_t)$ and prove that it is bounded and can only take finite values on the evolutionary path when the payoff matrix is rational and the game has an interior NE.
     Based on this, we prove that the stage strategy sequence of both players are periodic after finite stages and the time-averaged strategy of player Y within one period is an exact NE strategy. 
     Accordingly, we propose an asymmetric paradigm for solving two-player zero-sum games. 
     For the special game with rational payoff matrix and an interior NE, the paradigm can output the precise NE strategy; for any general games we prove that the time-averaged strategy can converge to an approximate NE. In comparison to the NE-solving method via Hedge self-play, this HBR paradigm exhibits faster computation/convergence,better stability and can attain precise NE convergence in most real cases. 
\end{abstract}

\keywords{Repeated game, Equilibrium solving, Hedge algorithm, Myopic best response, Evolutionary dynamical analysis}

\section{Introduction}

Game theory is widely used to model interactions and competitions among self-interested and rational agents in the real world \cite{fudenberg1991game, narahari2014game, osborne1994course}. In such situations, the utility of each agent is determined by the actions of all the other agents, leading to the solution concept of equilibrium, in which Nash equilibrium (NE) is a central one. This makes the NE-solving one of the most significant problems in game theory, which however is very hard for general games. 

The notion of NE \cite{nash1950equilibrium, nas1951non} aims to describe a stable state where each participant makes the optimal choice considering the strategies of others and thus has no incentive to change the strategy unilaterally. Due to the mutual influence of participants in the game and the existence of possible multiple equilibrium points in different scenarios, NE-solving is a difficult problem, which has been proven to be PPAD-hard \cite{chen2006settling, daskalakis2009complexity, papadimitriou1994complexity}. For the special two-player zero-sum games, linear programming \cite{karmarkar1984new, von2007theory} provides a powerful method to solve NE in polynomial time \cite{van2020deterministic}. However, in practical scenarios, because of large scalability issues, imperfect information and the complexity of multiple-stage dynamics, two-player zero-sum game is still a subject of ongoing investigation and attracts attentions from researchers in different fields \cite{mertikopoulos2018cycles, daskalakis2011near, perolat2022mastering, perolat2021poincare}. Especially, participants are often not perfectly rational, leading to research which aimed at approximating NE from a learning perspective \cite{fudenberg1998theory}.

Concerning learning in games, there has been a long history and plentiful literature, where a lot of learning algorithms have been proposed according to different settings. Under the Fictitious Play algorithm \cite{fudenberg1998theory}, the empirical distribution of actions taken by each player converges to NE if the stage game has generic payoffs and is $2\times 2$ \cite{robinson1951iterative} or zero-sum game \cite{miyasawa1961convergence} or potential game \cite{Monderer1996-sg}. When each player employs the no-regret algorithm \cite{cesa1997use, cesa2006prediction} to determine their stage strategy in repeated games, their time-averaged strategy profile converges to the coarse correlated equilibrium in general-sum games \cite{hart2000simple} and to NE in two-player zero-sum games. In imperfect-information extensive-form games, Zinkevich et al. \cite{zinkevich2007regret} proposed the counterfactual regret minimization (CFR) algorithm and proved its convergence to NE in the two-player zero-sum setting. Based on these methods, lots of variants were proposed and widely used in solving equilibrium in complicated games \cite{brown2018superhuman, brown2019solving, brown2019deep, lanctot2009monte, moravvcik2017deepstack, sayin2022fictitious}. Note that in all these works, every player in the game adopts the same learning algorithm and the convergence results are based on the term of time-averaged strategy. 

However, further investigation on the learning dynamics shows that even in simple game models, basic learning algorithms can lead to highly complex behavior and may not converge \cite{papadimitriou2016nash, piliouras2014optimization, sato2002chaos, piliouras2014persistent, hofbauer1996evolutionary}. Palaiopanos et al. \cite{palaiopanos2017multiplicative} discovered specific instances of $2 \times 2$ potential games where the behavior of multiplicative weights update (MWU) algorithm exhibits bifurcation at the critical value of its step size. Bailey and Piliouras \cite{bailey2018multiplicative} showed that in two-player zero-sum games, when both players adopt the MWU algorithm, the system dynamics deviate from equilibrium and converge towards boundary. 
Mertikopoulos et al. \cite{mertikopoulos2018cycles} studied the regularized learning algorithms in two-player zero-sum games and proved the Poincar\'e recurrence of the system behavior, implying the impossibility of convergence to NE from any initial strategy profile. Perolat et al. \cite{perolat2021poincare} extended the results of Poincar\'e recurrence from normal-form games to two-player zero-sum imperfect-information games and built an algorithm to approximate NE by solving a series of regularized game with unique NE. Generally speaking, there is no systematic framework for analyzing the limiting behavior of these repeated games \cite{jordan1993three, shapley1964some}. 

For the asymmetric case, as emphasized in \cite{candogan2013dynamics}, the limiting behavior of dynamic processes where players adhere to different update rules is an open question, even for potential games. Therefore, related theoretical analysis of such system is extremely rare. Our previous work \cite{guo2023optimal, guo2023taking} studied such dynamical system where one player employs the Hedge algorithm and the other player takes the globally or locally optimal strategy in finitely repeated two-player zero-sum games and proved its periodicity when the game is $2\times 2$. As a byproduct, our investigation reveals that the detailed understanding about the dynamics can facilitate the design of novel algorithms with special properties, thereby suggesting a promising avenue for advancing learning algorithms. 

This paper will consider the general $n\times n$ zero-sum stage game and investigate the dynamics of repeated game under asymmetric updating rules. To be specific, we will study the dynamics of the repeated game where one player (player X) employs the Hedge algorithm to update his stage strategy and the other player (player Y) adopts the according myopic best response to the stage strategy of player X. The main contributions of this paper can be summarized as follows.

\begin{enumerate}[(1)]
    \item This paper considers the Hedge-myopic system and investigates its dynamic by analyzing the trend of a quantity called $Q(\textbf{x})$ based on the Kullback-Leibler divergence, which measures the distance between the stage strategy $\textbf{x}$ and the NE strategy of player X.  For the game with rational payoff matrix and an interior NE, we prove that along the strategy sequence $\textbf{x}_t$ the Q-sequence $Q(\textbf{x}_t)$ is bounded and $\textbf{x}_t$ can only take finite values on the evolutionary path. This implies that the strategy sequence $\textbf{x}_t$ of player X will not converge to the NE strategy and justifies the finding in the literature.
    \item  Using the dynamic property, this paper theoretically proves that the stage strategy sequences of both players are periodic after finite stages for the game with rational payoff matrix and an interior NE. 
    Additionally, the time-averaged strategy of player Y within one period is an exact NE strategy.    
    \item Based on the theoretical results, this paper proposes an asymmetric paradigm called HBR for solving NE in two-player zero-sum games. For the special game with rational payoff matrix and an interior NE, the paradigm can output the precise NE strategy; for any general games we prove that the time-averaged strategy can converge to an approximate NE. In comparison to the NE-solving method via Hedge self-play, this HBR paradigm exhibits faster computation/convergence, better stability and can attain precise NE convergence in most real cases. 
\end{enumerate}

\textbf{Paper Organization:} Section \ref{sec_problem_formulation} provides the preliminary knowledge and problem formulation; Section \ref{sec_main_result} presents the main results regarding the periodicity of the system behavior; Section \ref{sec_HBR} proposes an asymmetric paradigm for solving NE and gives the experiment results; Section \ref{sec_conclusion} concludes the paper.

\section{Preliminary and Problem Formulation}\label{sec_problem_formulation}

\subsection{Online Learning and Hedge Algorithm}

Hedge algorithm is a popular no-regret learning algorithm proposed by Freund and Schapire, based on the context of boost learning \cite{freund1997decision}. Hedge algorithm is also known as weighted majority \cite{littlestone1994weighted} or exponential weighted average prediction \cite{cesa2006prediction}, or multiplicative weights update \cite{arora2012multiplicative}. 

Consider the online learning framework known as learning with expert advice \cite{cesa2006prediction}. In this framework, the decision maker is a forecaster whose goal is to predict an unknown sequence $q_1,q_2,\cdots$, where $q_t$ belongs to an outcome space $\mathcal{Q}$. The prediction of the forecaster at time $t$, denoted by $\hat{p}_t$, is assumed to belong to a convex subset $\mathcal{D}$ of $\mathcal{Q}$. At each time $t$, the forecaster receives a finite set of expert advice $\{f_{i,t}\} \in \mathcal{D}: i=1, 2, \cdots, N\}$, then the forecaster computes his own guess $\hat{p}_t$ based on $\{f_{i,t}\}$. Subsequently the true outcome $q_t$ is revealed. Predictions of the forecaster and experts are scored using a non-negative loss function $\ell:\mathcal{D}\times\mathcal{Q}\rightarrow\mathbb{R}$ and the \textit{cumulative regret} is introduced to measure how much better the forecaster could have done compared to how he did in hindsight, which is defined to be
\begin{equation*}
    R_{t} = \max\limits_{i = 1, 2, \cdots, N} \left\{ \sum\limits_{\tau=1}^t \left(\ell(\hat{p}_\tau,q_\tau)-\ell(f_{i,\tau},q_\tau)\right)\right\}.
\end{equation*}   

By the Hedge algorithm, the prediction $\hat{p}_t$ at time $t$ is taken as the weighted average of the predictions from the experts, i.e.,  $\hat{p}_t = \sum\nolimits_{i=1}^N w_{i, t-1} f_{i, t}$, where
\begin{equation*}
    w_{i, t-1} = \frac{e^{-\eta \sum\nolimits_{\tau=1}^{t-1} \ell(f_{i,\tau}, q_{\tau})}}{\sum\nolimits_{j=1}^N e^{-\eta \sum\nolimits_{\tau=1}^{t-1} \ell(f_{j,\tau}, q_{\tau})}}, \quad i=1, 2, \cdots, N,\ t\geq 1.
\end{equation*}
Then, the prediction sequence has the following regret bound.
\begin{theorem}[{Theorem~2.2 of \cite{cesa2006prediction}}]\label{thm_regret_bound}
    Assume that the loss function $\ell$ is convex in its first argument and takes values in $[0, 1]$. For any $t$ and $\eta > 0$, and for all $q_1, q_2, \cdots ,q_t \in \mathcal{Q}$, the regret for the Hedge algorithm satisfies
    \begin{equation*}
        R_t \leq \frac{\ln N}{\eta} + \frac{t \eta}{8}.
    \end{equation*}
    In particular, for $\eta = \sqrt{ \frac{8\ln N }{t}}$, the upper bound becomes $\sqrt{\frac{t}{2}\ln N}$.
\end{theorem}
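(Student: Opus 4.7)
The plan is to follow the classical potential function argument. I would define the cumulative losses $L_{i,t} = \sum_{\tau=1}^t \ell(f_{i,\tau}, q_\tau)$ and introduce the potential
$$\Phi_t = \sum_{i=1}^N e^{-\eta L_{i,t}}, \qquad \Phi_0 = N.$$
Observe that the Hedge weights satisfy $w_{i,t-1} = e^{-\eta L_{i,t-1}}/\Phi_{t-1}$, so the potential encodes the algorithm's state. The strategy is to sandwich $\ln\Phi_t$ from below by the performance of the best expert and from above by the cumulative loss of the forecaster, with a per-step slack that depends on $\eta$. The lower bound is immediate: retaining only the best-expert term gives $\ln\Phi_t \geq -\eta\,\min_i L_{i,t}$.

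For the upper bound, I would write the one-step ratio as an expectation,
$$\frac{\Phi_t}{\Phi_{t-1}} = \sum_{i=1}^N w_{i,t-1}\, e^{-\eta \ell(f_{i,t}, q_t)},$$
and recognize this as the moment generating function of the random variable that equals $-\ell(f_{i,t},q_t) \in [-1,0]$ with probability $w_{i,t-1}$. Applying Hoeffding's lemma on an interval of length $1$ yields
$$\ln\frac{\Phi_t}{\Phi_{t-1}} \leq -\eta \sum_{i=1}^N w_{i,t-1}\,\ell(f_{i,t},q_t) + \frac{\eta^2}{8}.$$
Here convexity of $\ell$ in its first argument is used to bound the forecaster's loss by the average of the experts' losses: $\ell(\hat p_t, q_t) \leq \sum_i w_{i,t-1}\,\ell(f_{i,t},q_t)$. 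Telescoping over $\tau=1,\ldots,t$ and combining with $\ln\Phi_0 = \ln N$ gives
$$\ln\Phi_t \leq \ln N - \eta\sum_{\tau=1}^t \ell(\hat p_\tau, q_\tau) + \frac{t\eta^2}{8}.$$

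Matching this with the lower bound $\ln\Phi_t \geq -\eta\min_i L_{i,t}$ and dividing by $\eta > 0$ rearranges exactly to
$$\sum_{\tau=1}^t \ell(\hat p_\tau, q_\tau) - \min_i L_{i,t} \leq \frac{\ln N}{\eta} + \frac{t\eta}{8},$$
which is the desired bound on $R_t$. For the optimal tuning, I would minimize the right-hand side in $\eta$; setting the derivative $-\ln N/\eta^2 + t/8$ to zero yields $\eta = \sqrt{8\ln N/t}$, at which both terms equal $\sqrt{(t\ln N)/8}$ and sum to $\sqrt{(t/2)\ln N}$.

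The only nontrivial ingredient is Hoeffding's lemma, which I would either invoke as a black box or prove in one line by bounding $e^{sx}$ on $[a,b]$ by the chord and computing the log-mgf of the resulting two-point distribution; this is the main technical obstacle, but it is entirely standard. Everything else is bookkeeping: the telescoping of $\ln\Phi_t$, the convexity step, and the final algebraic optimization over $\eta$.
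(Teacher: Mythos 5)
Your proof is correct and is exactly the canonical potential-function argument by which Theorem~2.2 of the cited reference is proved (the paper itself only imports the result by citation); the lower bound via the best expert, the per-step Hoeffding bound on the log-moment-generating function of $-\ell\in[-1,0]$, the Jensen step using convexity of $\ell$ in its first argument, the telescoping, and the optimization $\eta=\sqrt{8\ln N/t}$ all check out. Nothing is missing.
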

Theorem \ref{thm_regret_bound} implies that the time-averaged regret of the Hedge algorithm goes to zero as $t$ increases, i.e., $\lim \nolimits_{t\to \infty} R_t /t = 0$, which is the so-called \textit{no-regret} property. This theorem will be used to prove the convergence of the time-averaged strategy for the Hedge-myopic system in Section \ref{sec_main_result}. 

\subsection{Normal-form zero-sum Game}

Consider a two-player zero-sum normal-form game $\Gamma$. The players are called player X and player Y. Suppose there are $n, n\geq 2$ feasible actions for each player. We denote the action set of player X by $\mathcal{I} = \{1, 2, \cdots, n\}$ and the action set of player Y by $\mathcal{J}= \{1, 2, \cdots, n\}$. For each action profile $(i, j)$, the payoff obtained by player Y is $a_{ij}$ and thus the payoff obtained by player X is $-a_{ij}$ since the game is zero-sum. Naturally, the payoff of the game is shown by a matrix $A = \{a_{ij}\}_{i\in \mathcal{I}, j\in \mathcal{J}}$. The matrix is called the payoff matrix for player Y and the loss matrix for player X. 
A mixed strategy of a player is a probability distribution over his action set. Denote the mixed strategy of player X and player Y by $\textbf{x}\in \Delta(\mathcal{I})$ and $\textbf{y}\in \Delta(\mathcal{J})$ respectively. The bold font is used to emphasize that $\textbf{x}$ and $\textbf{y}$ are both vectors.
Given the mixed strategy profile $(\textbf{x}, \textbf{y})$, the payoff of player Y is $\textbf{x}^TA\textbf{y}$ and the payoff of player X is $-\textbf{x}^TA\textbf{y}$. 

Write the NE strategy profile of the game as $(\textbf{x}^\ast, \textbf{y}^\ast)$. Then the value of the game is 
\begin{equation}\label{Def_value}
    v^\ast := (\textbf{x}^\ast)^TA\textbf{y}^\ast.
\end{equation}
Denote the support of $\textbf{x}^\ast$ by $\operatorname{supp}(\textbf{x}^\ast) = \{i\in \mathcal{I}: x_i^\ast > 0\}$ and the support of $\textbf{y}^\ast$ by $\operatorname{supp}(\textbf{y}^\ast) = \{j\in \mathcal{J}: y_j^\ast > 0\}$. A NE is said to be interior if $\operatorname{supp}(\textbf{x}^\ast) = \mathcal{I}$ and $\operatorname{supp}(\textbf{y}^\ast) = \mathcal{J}$. 

A strategy profile $(\textbf{x}, \textbf{y})$ is called a $\varepsilon-$Nash equilibrium ($\varepsilon-$NE) if for all $\textbf{x}^\prime\in \Delta(\mathcal{I})$ and $\textbf{y}^\prime\in \Delta(\mathcal{J})$, we have
\begin{equation*}
    \textbf{x}^T A \textbf{y} \geq \textbf{x}^T A \textbf{y}^\prime - \varepsilon \quad\text{and}\quad  \textbf{x}^T A \textbf{y} \leq (\textbf{x}^\prime)^T A \textbf{y} + \varepsilon.
\end{equation*}
Given a strategy profile $(\textbf{x}, \textbf{y})$, the \textit{exploitability} \cite{lockhart2019computing} of strategy $\textbf{x}$ is defined as the potential gain for player Y if she switches to the best response of strategy $\textbf{x}$ suppose that $\textbf{x}$ is fixed, i.e., 
\begin{equation*}
    e_x(\textbf{x}, \textbf{y}) \triangleq \max\nolimits_{\textbf{y}^\prime\in \Delta(\mathcal{J})} \textbf{x}^T A \textbf{y}^\prime - \textbf{x}^T A \textbf{y}.
\end{equation*}
Similarly, the exploitability of the strategy $\textbf{y}$ is defined to be 
\begin{equation*}
    e_y(\textbf{x}, \textbf{y}) \triangleq  \textbf{x}^T A \textbf{y} - \min\nolimits_{\textbf{x}^\prime\in \Delta(\mathcal{I})} (\textbf{x}^\prime)^T A \textbf{y}.
\end{equation*}
Intuitively, if a strategy has low exploitability, it is difficult for the opponent to take advantage of it, while a strategy with high exploitability can be effectively exploited by the opponent. Given a strategy profile $(\textbf{x}, \textbf{y})$, a common metric to measure its distance to NE, called \textit{Nash Distance} ($\operatorname{ND}$) \cite{lockhart2019computing}, is 
\begin{equation*}
    \operatorname{ND}(\textbf{x}, \textbf{y}) \triangleq e_x(\textbf{x}, \textbf{y}) + e_y(\textbf{x}, \textbf{y}).
\end{equation*}
Since the game is zero-sum, $\operatorname{ND}(\textbf{x}, \textbf{y}) = \max\nolimits_{\textbf{y}^\prime\in \Delta(\mathcal{J})} \textbf{x}^T A \textbf{y}^\prime - \min\nolimits_{\textbf{x}^\prime\in \Delta(\mathcal{I})} (\textbf{x}^\prime)^T A \textbf{y}$.

If the game admits an interior NE strategy $\textbf{x}^\ast$ of player X, we denote the cross entropy between strategy $\textbf{x}$ and $\textbf{x}^\ast$ by function $Q(\mathbf{x})$, i.e.,
\begin{equation}\label{Def_Q(x)}
    Q(\mathbf{x}) \triangleq - \sum\limits_{i=1}^n x_i^\ast \ln x_{i}
\end{equation}
where $0 < x_i < 1$ for all $i\in \mathcal{I}$. It is easy to see that $Q(\textbf{x}) > 0$ for all $\textbf{x}$.

In the following, we will focus on infinitely repeated game, that is to say, let the game $\Gamma$ be repeated for infinite times. We denote $\textbf{x}_t$, $\textbf{y}_t$ the stage strategy of player X and player Y at time $t$ respectively. Then, the instantaneous expected payoff of player Y is $\textbf{x}_t^T A\textbf{y}_t$. Different stage strategy updating rules would lead to different stage strategy sequences of player X and Y, which form different game dynamic systems. Below we will study the dynamic characteristics of the repeated games driven by the Hedge algorithm and the myopic best response. 

\subsection{Problem Formulation}

First, let player X update his stage strategy according to the Hedge algorithm. Specifically, the strategy of player X at time $t$ is updated by
\begin{equation}\label{Eq_hedge_formula}
    x_{i, t} = \frac{\exp(-\eta \sum\limits_{\tau=1}^{t-1}e_i^TA\textbf{y}_{\tau})}{\sum\limits_{j=1}^n\exp(-\eta \sum\limits_{\tau=1}^{t-1}e_j^TA\textbf{y}_{\tau})},\quad i = 1, 2, \cdots, n,
\end{equation}
where $\eta$ is called \textit{the learning rate}, which is a constant parameter. In this paper, we assume that  $\eta$ is sufficiently small and determined by the payoff matrix $A$.

From formula \eqref{Eq_hedge_formula}, $\textbf{x}_t$ is fully determined by $\textbf{y}_1$, $\textbf{y}_2$, ...,  $\textbf{y}_{t-1}$. Further, we can compute $\textbf{x}_{t}$ from $\textbf{x}_{t-1}$ and $\textbf{y}_{t-1}$ and get  
\begin{equation}\label{Eq_xt+1_and_xt_yt}
    x_{i, t} = \frac{x_{i, t-1} \exp(-\eta e_i^T A \textbf{y}_{t-1})}{\sum\nolimits_{j=1}^n x_{j, t-1} \exp(-\eta e_j^T A \textbf{y}_{t-1})}, \quad \forall\ i= 1, 2, \cdots, n.
\end{equation}

Then, let player Y only consider maximizing her instantaneous expected payoff and take myopic best response to $\textbf{x}_t$ at each time $t$. 
In most cases, the myopic best response is unique and is pure strategy. When the best response is not unique, we stipulate that player Y chooses the pure strategy with the smallest subscript in $\mathcal{J}$ , i.e., 
\begin{equation}\label{Eq_yt_formula}
    \textbf{y}_t = BR(\textbf{x}_t)  \triangleq \textbf{y}^{\bar{j}},\quad \text{where}\  \bar{j} = \min\{j\in \mathcal{J}: \textbf{x}_t^TA \textbf{y}^j = \max\limits_{\textbf{y}\in \Delta(\mathcal{J})} \textbf{x}_t^T A \textbf{\textbf{y}}\}
\end{equation}
and $\textbf{y}^j$ is a pure-strategy vector with only the $j$-th element is 1 and all the other elements are 0. 
Then, the strategy of player Y at each stage is well-defined and is pure strategy. Apparently $\textbf{y}_t$ is totally determined by $\textbf{x}_t$. Combined with \eqref{Eq_xt+1_and_xt_yt}, we know that $\textbf{x}_{t+1}$ is fully determined by $\textbf{x}_t$, with no randomness involved.

Given the action rule of player X and Y as above, the infinitely repeated game is intrinsically determined and the system is called \textit{the Hedge-myopic system}.
In this paper, we will study the dynamic characteristics of such a dynamical system and try to answer the questions like: Is the system periodic? Does the system converge?

\section{Main Results}\label{sec_main_result}

\subsection{Rational Games with an Interior Equilibrium}

In this session, we prove that for the game with rational payoff matrix and an interior NE, the dynamics of the Hedge-myopic system is periodic after finite stages. 

We state some assumptions below: 

\textbf{Assumption 1}:  The payoff matrix is rational and the game has an unique interior NE, denoted by $(\mathbf{x}^\ast, \mathbf{y}^\ast)$; 

\textbf{Assumption 2}: The matrices $A_i \in \mathbb{R}^{n\times n}$ are all non-singular
for $i=1,2,...,n$ where  $A_i$ is defined as 
\begin{equation}
    \begin{pmatrix}
        a_{1,1} & a_{2,1} & \cdots & a_{n,1}\\
        \vdots & \ & \  & \vdots \\
        a_{1, i-1} & a_{2, i-1} & \cdots & a_{n, i-1} \\
        a_{1, i+1} & a_{2, i+1} & \cdots & a_{n, i+1} \\
        \vdots & \ & \  & \vdots \\
        a_{1, n} & a_{2, n} & \cdots & a_{n, n} \\
        1 & 1 & \cdots & 1
    \end{pmatrix}.
\end{equation}

We note here that the uniqueness of equilibrium here is actually not a necessary condition. Through experiments, we found that as long as the game has interior equilibrium, even if it is not unique, the game system can still generate cycles. The assumption of uniqueness is a requirement in theoretical proofs. 

Recall formula \eqref{Def_Q(x)} and \eqref{Eq_hedge_formula}, we denote 
\begin{equation}\label{Def_goodness}
    Q_t = Q(\textbf{x}_t) = - \sum\limits_{i=1}^n x_i^\ast \ln x_{i,t} = \ln \left( \sum\limits_{i=1}^n e^{\eta \sum_{\tau=1}^{t-1} (v^\ast - e_i^T A \textbf{y}_\tau)} \right),
\end{equation}
where $v^\ast$ is the value of the game. Thus $Q_t$ is determined by $\textbf{y}_1$, $\textbf{y}_2$, ...,  $\textbf{y}_{t-1}$ and it can measures the ``goodness'' of $\textbf{x}_t$. Call $\{Q_t\}$ \textit{the Q-sequence}.

$Q_t$ actually is based on \textit{the Kullback-Leibler divergence} (KL divergence), which measures the level of resemblance between the strategy $\textbf{x}_t$ and the NE strategy $\textbf{x}^\ast$ and defined as  
\begin{equation}\label{Def_KL_divergence}
    \operatorname{KL}(\textbf{x}^\ast, \textbf{x}_t) = - \sum\limits_{i=1}^n x_i^\ast \ln x_{i, t} + \sum\limits_{i=1}^n x_i^\ast \ln x_{i}^\ast.
\end{equation}
Here, we omit the term $\sum\nolimits_{i=1}^n x_i^\ast \ln x_{i}^\ast$ for convenience and obviously $$\operatorname{KL}(\textbf{x}^\ast, \textbf{x}_{t+1}) > \operatorname{KL}(\textbf{x}^\ast, \textbf{x}_t) \Leftrightarrow Q_{t+1} > Q_{t}.$$ Hence, by studying the Q-sequence especially the difference between $Q_{t+1}$ and $Q_t$, we can obtain the variation in the level of resemblance between the stage strategy and the NE strategy along the time. 

By \eqref{Def_goodness}, we can calculate
\begin{align}
    Q_{t+1} - Q_t =  \ln \left(  \sum\limits_{i=1}^n x_{i, t}  e^{\eta (v^\ast - e_i^T A \textbf{y}_t)} \right)  \label{Eq_Qt+1_minus_Qt}.
\end{align}

Recall that $e^\omega \sim \omega+1$ if $\omega$ is small. Substitute this into the definition of $Q_t$,  we obtain
\begin{equation}\label{budengshi2}
    Q_{t+1}-Q_t \approx \ln \left(  \sum\limits_{i=1}^n x_{i, t} \left(1 + \eta (v^\ast - e_i^T A \textbf{y}_t)\right) \right).
\end{equation}

On the other hand, since $\textbf{y}_t$ is the myopic best response of $\textbf{x}_t$, i.e., $\textbf{x}_t^T A \textbf{y}_t = \max\nolimits_{\textbf{y}\in \Delta(\mathcal{J})} \textbf{x}_t^T A \textbf{y}$, we have  
\begin{equation}\label{budengshi1}
    \sum\limits_{i=1}^n x_{i, t}  (v^\ast - e_i^T A \textbf{y}_t) = v^\ast - \textbf{x}_t^T A \textbf{y}_t \leq 0
\end{equation}
by the Minimax theorem \cite{von2007theory}.

Substitute \eqref{budengshi1} to \eqref{budengshi2}, we get 
\begin{equation*}
    Q_{t+1}-Q_t \approx \ln \left(  \sum\limits_{i=1}^n x_{i, t} \left(1 + \eta (v^\ast - e_i^T A \textbf{y}_t)\right) \right) \leq 0,
\end{equation*}
indicating that roughly speaking, the value of $Q_t$ gradually decreases as time $t$ increases. 

Inspired by this rough estimation, we can prove the below theorem.

\begin{theorem}\label{Lem_Qsequence_bounded}
    There exists a positive number $M_Q$ such that $0 < Q_t \leq M_Q$ for all $t\geq 1$, i.e., the Q-sequence is bounded.
\end{theorem}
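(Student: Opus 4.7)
The plan is to upgrade the heuristic computation leading to equation~\eqref{budengshi2} into a rigorous one-step estimate of the form $Q_{t+1}-Q_t\leq -\eta\Delta_t + C\eta^2$ with $\Delta_t:=\textbf{x}_t^TA\textbf{y}_t-v^\ast\geq 0$, and then to use it in a two-regime induction on $t$. Starting from the exact identity~\eqref{Eq_Qt+1_minus_Qt}, I would apply Hoeffding's lemma --- equivalently, a second-order Taylor expansion of the exponential in the same spirit as~\eqref{budengshi2} but with an explicit remainder --- to the random variable taking values $\eta(v^\ast-e_i^TA\textbf{y}_t)$ under the law $\textbf{x}_t$. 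Since each $e_i^TA\textbf{y}_t$ lies in a bounded interval determined by the entries of $A$, this produces an absolute constant $C=C(A)$ for the quadratic term, while the linear term $-\eta\Delta_t$ is non-positive by the minimax argument recorded in~\eqref{budengshi1}.

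With this one-step bound in hand, I would introduce the ``near-equilibrium'' sublevel set
\[
R=\{\textbf{x}\in\Delta(\mathcal{I}):\max_{\textbf{y}\in\Delta(\mathcal{J})}\textbf{x}^TA\textbf{y}-v^\ast\leq C\eta\}
\]
and split into two cases. If $\textbf{x}_t\notin R$, then $\Delta_t>C\eta$ and the one-step estimate gives $Q_{t+1}\leq Q_t$, so the $Q$-sequence cannot grow. If $\textbf{x}_t\in R$, the estimate only yields $Q_{t+1}\leq Q_t+C\eta^2$, so I would need a uniform a priori bound $Q_t\leq M_R$ whenever $\textbf{x}_t\in R$. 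This is where Assumption~1 enters essentially: uniqueness of the interior NE means $\textbf{x}^\ast$ is the only player-X strategy at which $\max_{\textbf{y}}\textbf{x}^TA\textbf{y}=v^\ast$, so the exploitability $\max_{\textbf{y}}\textbf{x}^TA\textbf{y}-v^\ast$ is strictly positive on the boundary $\partial\Delta(\mathcal{I})$; by compactness it attains a positive minimum $\gamma>0$, and taking $\eta$ small enough that $C\eta<\gamma$ forces $R$ to be a compact subset of the open simplex, on which the continuous function $Q$ is bounded, giving $M_R:=\sup_{\textbf{x}\in R}Q(\textbf{x})<\infty$.

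The argument then closes by induction with the invariant $Q_t\leq M_Q:=\max(Q_1,\,M_R+C\eta^2)$: the ``outside $R$'' case preserves it via $Q_{t+1}\leq Q_t$, and the ``inside $R$'' case preserves it via $Q_{t+1}\leq M_R+C\eta^2$. Positivity $Q_t>0$ is immediate because every $x_{i,t}$ lies in $(0,1)$ and every $x_i^\ast>0$. I expect the compactness/separation step in the middle paragraph --- namely that $R$ stays away from $\partial\Delta(\mathcal{I})$ once $\eta$ is small --- to be the main obstacle; it is the only place where interiority plus uniqueness of the NE are genuinely used, and it also explains the paper's standing requirement that $\eta$ be sufficiently small and determined by the payoff matrix $A$.
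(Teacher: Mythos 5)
Your proof is correct and follows the same overall architecture as the paper's: a rigorous one\nobreakdash-step bound $Q_{t+1}-Q_t\le \eta(v^\ast-\textbf{x}_t^TA\textbf{y}_t)+\eta^2\delta^2/8$ obtained from the Hoeffding-type inequality (Lemma~\ref{Lem_Hoffeding_inequility}), a sublevel set of the exploitability on which $Q$ may fail to decrease (your $R$ is exactly the paper's $Z_u$, with $C=\delta^2/8$), a uniform bound on $Q$ over that set, and a temporal induction. The one place you genuinely diverge is in bounding $Q$ on $R$: the paper shows $Z_u$ is a bounded polyhedron, computes its vertices as the solutions of the perturbed linear systems $A_k\textbf{x}=\textbf{c}+\delta_c$ (this is precisely where Assumption~2, non-singularity of the $A_k$, enters), bounds each vertex's distance to the interior point $\textbf{x}^\ast$ by $\lambda_m\sqrt{n}\,\eta\delta^2/8$, and invokes the vertex representation of bounded polyhedra (Proposition~\ref{Lem_interior}). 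You instead argue by compactness and separation: the exploitability $\max_{\textbf{y}}\textbf{x}^TA\textbf{y}-v^\ast$ is continuous, nonnegative, and vanishes only at the unique interior NE, hence attains a positive minimum $\gamma$ on $\partial(\Delta(\mathcal{I}))$, so for $C\eta<\gamma$ the compact set $R$ is disjoint from the boundary and its coordinates are uniformly bounded below. Your route is more elementary and dispenses with Assumption~2 entirely (interiority of player~X's equilibrium set is all that is really needed); what the paper's route buys is an explicit, computable smallness threshold on $\eta$ in terms of $\|A_k^{-1}\|_2$, whereas your $\gamma$ is non-constructive. The closing step is interchangeable: your invariant $M_Q=\max(Q_1,\,M_R+C\eta^2)$ plays the same role as the paper's $\max\{\ln n,\,M_p+\eta\delta_u\}$, and both correctly handle excursions outside the sublevel set by monotone decrease of $Q$ there.
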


To prove Theorem 3.1, we need to study the sequence  $\textbf{x}_t$ and $Q_t$ in details. To this end, define
\begin{equation}\label{Def_Dx}
    D(\textbf{x}) \triangleq  \ln \left(  \sum\limits_{i=1}^n x_{i}  e^{\eta (v^\ast - \textbf{e}_i^T A \textbf{y}_x)} \right), \quad \textbf{x}\in \Delta(\mathcal{I})
\end{equation}
where $\textbf{y}_x =BR(\textbf{x})  \triangleq   \textbf{y}^{\bar{j}}$ with $\bar{j} = \min\{j\in \mathcal{J}: \textbf{x}^TA \textbf{y}^j = \max\nolimits_{\textbf{y}\in \Delta(\mathcal{J})} \textbf{x}^T A \textbf{y}\}$.
Take $\textbf{x} = \textbf{x}_t$, we have $D(\textbf{x}_t) = Q_{t+1} - Q_t$. 
Obviously, if $D(\textbf{x}_t) < 0$, then $Q_{t+1} < Q_t$; if $D(\textbf{x}_t) \geq 0$, then $Q_{t+1} \geq Q_t$. 

Depending on whether $D(\textbf{x})$ is positive, for the mixed strategy set $\Delta(\mathcal{I})$, define
\begin{equation*}
    Z_p \triangleq \{\textbf{x}\in \Delta(\mathcal{I}): D(\textbf{x})\geq 0\} \quad \text{and}\quad Z_n \triangleq \{\textbf{x}\in \Delta(\mathcal{I}): D(\textbf{x})< 0\}.
\end{equation*}
Easy to see that the NE strategy $\textbf{x}^* \in Z_p$, hence $Z_p \neq \emptyset$.

To locate the regions $Z_p$ and $Z_n$ on $\Delta(\mathcal{I})$, we need the following inequality in probability theory.
\begin{lemma}[Lemma~A.1 of {\cite{cesa2006prediction}}]
\label{Lem_Hoffeding_inequility}
    Let $\mathbf{z}$ be a random variable with $a\leq \mathbf{z}\leq b$. Then, for any $s\in \mathbb{R}$, 
    \begin{equation*}
        \ln \mathbb{E}(e^{s\mathbf{z}})\leq s\mathbb{E}\mathbf{z} + \frac{s^2 (b-a)^2}{8}.
    \end{equation*}
\end{lemma}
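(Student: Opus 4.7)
The plan is to reduce the inequality to the centered case $\mathbb{E}[\mathbf{z}] = 0$ and then bound the logarithm of the moment generating function via a two-step argument: first dominate $e^{s\mathbf{z}}$ pointwise by the secant line joining $(a, e^{sa})$ to $(b, e^{sb})$, and then Taylor-expand the resulting cumulant-type function, whose second derivative carries the classical factor $1/4$ responsible for the $1/8$ in the stated bound.

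First I would set $\tilde{\mathbf{z}} := \mathbf{z} - \mathbb{E}[\mathbf{z}]$, so that $\tilde{\mathbf{z}}$ is centered and still lies in an interval of length $b - a$. Since $\mathbb{E}[e^{s\mathbf{z}}] = e^{s\mathbb{E}[\mathbf{z}]}\,\mathbb{E}[e^{s\tilde{\mathbf{z}}}]$, taking logarithms reduces the claim to proving $\ln \mathbb{E}[e^{s\tilde{\mathbf{z}}}] \leq s^2 (b-a)^2 / 8$ under the extra hypothesis $\mathbb{E}[\tilde{\mathbf{z}}] = 0$; I would carry this out next, abusing notation and still writing $\mathbf{z}, a, b$.

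Second, exploiting convexity of $u \mapsto e^{su}$ on $[a,b]$, the chord inequality gives
\[
e^{s u} \;\leq\; \frac{b - u}{b - a}\, e^{sa} + \frac{u - a}{b - a}\, e^{sb}, \qquad u \in [a, b].
\]
Taking expectations and using $\mathbb{E}[\mathbf{z}] = 0$ collapses the right-hand side to $\mathbb{E}[e^{s\mathbf{z}}] \leq (1-\alpha)\, e^{sa} + \alpha\, e^{sb}$ with $\alpha := -a/(b-a) \in [0,1]$. Setting $\psi(s) := \ln\bigl[(1-\alpha)e^{sa} + \alpha e^{sb}\bigr]$, what remains is the purely analytic estimate $\psi(s) \leq s^2(b-a)^2/8$ for all $s \in \mathbb{R}$.

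The closing step is a Taylor expansion of $\psi$ at $s = 0$. Factoring out $e^{sa}$, I would write $\psi(s) = sa + g\bigl(s(b-a)\bigr)$ with $g(u) = \ln(1 - \alpha + \alpha e^u)$. A short computation gives $\psi(0) = 0$ and $\psi'(0) = a + \alpha(b-a) = 0$, while
\[
g''(u) \;=\; p(u)\bigl(1 - p(u)\bigr), \qquad p(u) := \frac{\alpha e^u}{1 - \alpha + \alpha e^u} \in (0,1).
\]
The main obstacle, and the only non-mechanical ingredient, is the uniform bound $p(1-p) \leq 1/4$; granting this, $\psi''(s) = (b-a)^2 g''(s(b-a)) \leq (b-a)^2/4$ for every $s$, and Taylor's theorem with Lagrange remainder yields $\psi(s) \leq s^2(b-a)^2/8$. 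Restoring the shift recovers the $s\mathbb{E}[\mathbf{z}]$ term and completes the proof of the lemma.
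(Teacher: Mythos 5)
Your proof is correct and is precisely the standard argument for Hoeffding's lemma (centering, the chord bound from convexity of $u\mapsto e^{su}$, and the Taylor estimate $\psi''\leq (b-a)^2/4$ via $p(1-p)\leq 1/4$), which is exactly the proof given in the cited source (Lemma A.1 of Cesa-Bianchi and Lugosi); the paper itself states the lemma by citation and offers no proof of its own. The only cosmetic omission is the degenerate case $a=b$, where the chord construction divides by zero but the claim is trivially an equality.
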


Applying Lemma \ref{Lem_Hoffeding_inequility} , we can estimate $D(\textbf{x})$ as below.
\begin{align*}
    D(\textbf{x}) = \ln \left(  \sum\limits_{i=1}^n x_{i}  e^{\eta (v^\ast - e_i^T A \textbf{y}_x)} \right) &\leq \sum\limits_{i=1}^n x_i  \eta (v^\ast - e_i^T A \textbf{y}_x) + \frac{\eta^2 \delta^2}{8} \\
    & = \eta( v^\ast - x^T A \textbf{y}_x + \frac{\eta \delta^2}{8})
\end{align*}
where $\delta = \max\nolimits_{i\in \mathcal{I}, j\in \mathcal{J}}a_{i,j} - \min\nolimits_{i\in \mathcal{I}, j\in \mathcal{J}} a_{i,j}$. 
Then, depending on whether $v^\ast - \textbf{x}^T A \textbf{y}_x + \frac{\eta \delta^2}{8}$ is positive, we further split the set $\Delta(\mathcal{I})$ into another two regions $Z_u$ and $Z_d$ where 
\begin{equation*}
    Z_u \triangleq \{\textbf{x}\in \Delta(\mathcal{I}): v^\ast - \textbf{x}^T A \textbf{y}_x + \frac{\eta \delta^2}{8} \geq 0\} \quad \text{and}\quad Z_d \triangleq \{\textbf{x}\in \Delta(\mathcal{I}): v^\ast - \textbf{x}^T A \textbf{y}_x + \frac{\eta \delta^2}{8} < 0\}.
\end{equation*}
For the strategy $\textbf{x} \in \Delta(\mathcal{I}) $, if $ \textbf{x} \in Z_d$, immediately we have $D(\textbf{x}) < 0$, i.e., $ \textbf{x} \in Z_u$. Hence, we have the \textbf{Claim 1} below:

\textbf{Claim 1:} $Z_d\subset Z_n$ and $Z_p\subset Z_u$.

\begin{figure}[ht]
    \centering
    \includegraphics[width=0.8\textwidth]{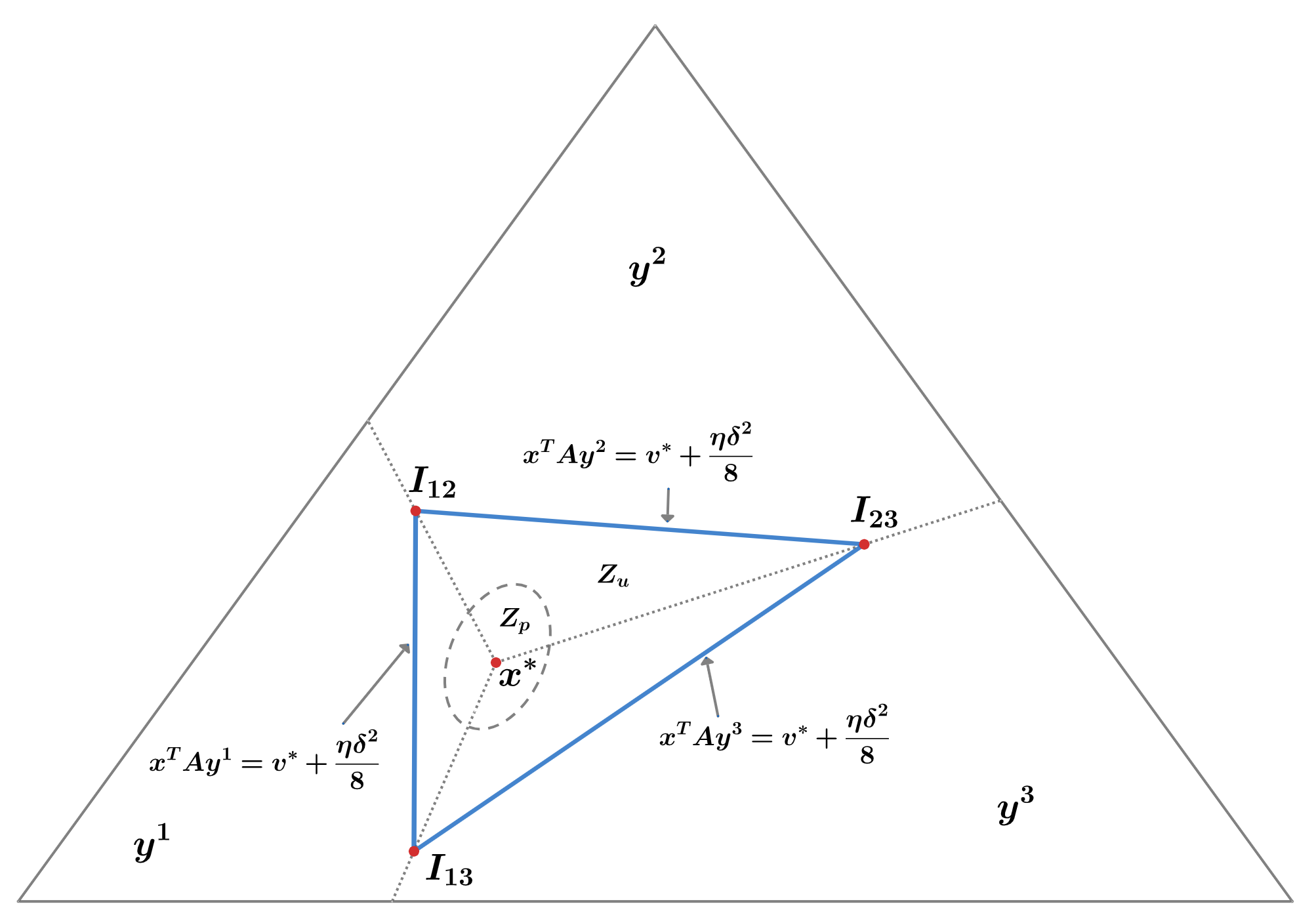}
    \caption{Graphical illustration of $Z_p$, $Z_u$ and NE for $3\times 3$ games.}
    \label{fig_illus}
\end{figure}

Figure \ref{fig_illus} gives a graphical illustration about the region $Z_p$, $Z_u$, $\Delta(\mathcal{I}) $ and interior NE strategy for a $3\times 3$ game. 
In this figure, the entire triangular region represents the simplex $\Delta(\mathcal{I})$, and the red point in the center represents the NE strategy $\textbf{x}^\ast$ of player X; the gray dashed lines divide the triangular region into three areas, labeled $\textbf{y}^1, \textbf{y}^2$, and $\textbf{y}^3$ respectively, which represents the best response action of player Y to the strategies in that area; the points on the blue solid line represent the strategy whereby the payoff of player Y is equal to $v^\ast + \frac{\eta \delta^2}{8}$ when player Y adopts the corresponding best response, and the triangular region enclosed by the blue solid line is region $Z_u$; within region $Z_u$, the elliptical region enclosed by the gray dashed line (the actual region $Z_p$ may not necessarily be elliptical) is region $Z_p$, and the NE strategy $\textbf{x}^\ast$ is located in region $Z_p$.


For the region $Z_u = \{ \textbf{x} \in \Delta(\mathcal{I}): \textbf{x}^T A \textbf{y}_x \leq v^\ast + \frac{\eta \delta^2}{8}\}$, we claim that:

\textbf{Claim 2: }The region $Z_u$ can be further rewritten as $Z_u = \{\textbf{x}\in \Delta(\mathcal{I}): \max\nolimits_{\textbf{y}^j} \textbf{x}^T A \textbf{y}^j \leq v^\ast + \frac{\eta \delta^2}{8}\} =  \{\textbf{x}\in \Delta(\mathcal{I}): A^T \textbf{x}\leq \textbf{b}\}$, where $\textbf{b} = [v^\ast + \frac{\eta \delta^2}{8}, v^\ast + \frac{\eta \delta^2}{8}, \cdots, v^\ast + \frac{\eta \delta^2}{8}]^T$. Thus, the region $Z_u$ is a bounded polyhedron.

For the bounded polyhedron, we have the following result, which can be found in the Theorem~2.9 of \cite{bertsimas1997introduction}.

\begin{lemma}[Representation of Bounded Polyhedra]\label{Lem_polyhedra}
    A bounded polyhedron is the set of all convex combinations of its vertices. 
\end{lemma}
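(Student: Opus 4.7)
The plan is to prove the classical representation result by induction on the dimension $d$ of the bounded polyhedron $P$, where $\dim P$ is the dimension of its affine hull. Before starting the induction I would record two preliminary observations: the reverse inclusion (every convex combination of vertices lies in $P$) is immediate from convexity of $P$, so only the nontrivial direction needs work; and $P$ has only finitely many vertices, because each vertex is the unique solution of a square, full-rank subsystem of the defining inequalities made tight, and only finitely many such subsystems can be formed.

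For the base case $d=0$, $P$ is a single point that is trivially its own vertex. For the inductive step, fix $x\in P$; if $x$ is already a vertex, there is nothing to prove, so assume otherwise. I would then produce a nonzero direction $v$ parallel to the affine hull of the smallest face of $P$ containing $x$, and use boundedness to see that $\{t\in\mathbb{R}:x+tv\in P\}$ is a closed interval $[t_-,t_+]$ with $t_-<0<t_+$. Set $x_\pm := x+t_\pm v$. Because $x$ was not a vertex of $P$, the endpoints $x_\pm$ satisfy at least one additional active inequality beyond those active at $x$, so each lies in a proper face of $P$ of strictly smaller dimension. That face is itself a bounded polyhedron, and its vertices are also vertices of $P$; the inductive hypothesis therefore expresses each $x_\pm$ as a convex combination of vertices of $P$. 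Finally $x=\lambda x_- + (1-\lambda)x_+$ with $\lambda = t_+/(t_+-t_-)\in(0,1)$, so $x$ inherits a convex-combination representation in the vertices of $P$.

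The main obstacle I foresee is the careful bookkeeping in the inductive step: justifying that each $x_\pm$ truly lies on a face of strictly smaller dimension, and that vertices of a face of $P$ are vertices of $P$ itself. Both reduce to standard consequences of the definition of faces via activating inequalities, but they must be handled cleanly so the induction closes. The choice of the direction $v$ is also delicate; taking $v$ parallel to the affine hull of the minimal face containing $x$ is what guarantees a nondegenerate interval $[t_-,t_+]$ and ensures that the line $x+tv$ does not leave the affine hull of $P$ in a way that would destroy the dimension-reduction argument. An alternative plan, if the polyhedral machinery becomes cumbersome, is to appeal to the Krein--Milman theorem: $P$ is compact and convex, so it equals the closed convex hull of its extreme points, and then a separate argument identifies the extreme points of $P$ with its vertices and shows finiteness lets us drop the closure.
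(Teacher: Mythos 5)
Your argument is correct, and it is essentially the proof of the result the paper relies on: the paper does not prove this lemma itself but cites it as Theorem~2.9 of Bertsimas and Tsitsiklis, whose proof is exactly your induction on the dimension of the bounded polyhedron (move from a non-vertex point along a direction in its minimal face until an additional constraint becomes active at both endpoints, recurse on the lower-dimensional faces, and combine). The only wording quibble is that the endpoints $x_\pm$ gain an extra active inequality because $t_\pm$ are the maximal parameters for which $x+tv\in P$, not because $x$ fails to be a vertex (that failure is what supplies the direction $v$); this does not affect the validity of the plan.
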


In Figure \ref{fig_illus}, the points $I_{12}, I_{13}$ and $ I_{23}$ are the vertices of polyhedron $Z_u$. Thus by Lemma \ref{Lem_polyhedra}, every point in $Z_u$ can be written as a convex combination of these vertex points.
Then, we can prove that the polyhedron $Z_u$ must lie in the strict interior of $\Delta(\mathcal{I})$.

\begin{proposition}\label{Lem_interior}
    If $\textbf{x} \in Z_u$, then the elements of $\textbf{x}$ are uniformly lower bounded. That is, for all $\mathbf{x}\in Z_u$, there exists $\varepsilon_d>0$ such that $x_i > \varepsilon_d$ for all $i\in \mathcal{I}$. 
\end{proposition}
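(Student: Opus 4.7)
The plan is to exploit Claim 2 together with Lemma \ref{Lem_polyhedra} (representation of bounded polyhedra) to reduce the uniform lower bound on $Z_u$ to a lower bound at its finitely many vertices. Since $Z_u$ is a bounded polyhedron, it has a finite vertex set $\{\textbf{v}^{(1)},\dots,\textbf{v}^{(K)}\}$, and every $\textbf{x}\in Z_u$ can be written as a convex combination $\textbf{x}=\sum_{\ell=1}^K \lambda_\ell \textbf{v}^{(\ell)}$. If I can show that $v^{(\ell)}_i>0$ for every $\ell$ and every $i$, then setting $\varepsilon_d \triangleq \tfrac{1}{2}\min_{\ell,i} v^{(\ell)}_i$ immediately gives $x_i=\sum_\ell \lambda_\ell v^{(\ell)}_i \geq 2\varepsilon_d >\varepsilon_d$ uniformly on $Z_u$. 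Thus the proposition reduces to ruling out any vertex of $Z_u$ with a vanishing coordinate.

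For that step I plan to use Assumption 1 to produce a strictly positive gap on the simplex boundary. Define the excess function $g(\textbf{x})\triangleq \max_{\textbf{y}\in\Delta(\mathcal{J})} \textbf{x}^T A \textbf{y} - v^\ast$; the Minimax theorem gives $g\geq 0$ on $\Delta(\mathcal{I})$, and in a two-player zero-sum game uniqueness of NE forces uniqueness of player X's minimax strategy, so $\textbf{x}^\ast$ is the unique zero of $g$. Because the NE is interior, $\textbf{x}^\ast$ lies strictly inside $\Delta(\mathcal{I})$, hence $g$ is strictly positive on the compact boundary $B=\{\textbf{x}\in\Delta(\mathcal{I}):\, x_k=0\text{ for some }k\}$. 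Continuity and compactness then deliver a uniform constant $\epsilon \triangleq \min_{\textbf{x}\in B} g(\textbf{x})>0$. Observing that the defining inequalities $A^T\textbf{x}\leq \textbf{b}$ of $Z_u$ translate exactly into $g(\textbf{x})\leq \eta\delta^2/8$, any putative vertex $\textbf{v}\in Z_u\cap B$ would satisfy $\epsilon\leq g(\textbf{v})\leq \eta\delta^2/8$, which is excluded by the standing small-$\eta$ hypothesis (stated just after equation \eqref{Eq_hedge_formula}) since we may take $\eta<8\epsilon/\delta^2$. This contradiction rules out boundary vertices and completes the reduction.

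The main obstacle is the quantitative step $g\geq \epsilon>0$ on $B$: both the uniqueness and the interiority parts of Assumption 1 are used essentially, since without uniqueness another minimax strategy of player X could sit on the boundary of $\Delta(\mathcal{I})$ and force $g$ to vanish there, and without interiority $\textbf{x}^\ast$ itself could lie on $B$. Beyond this key step, the remaining ingredients—Lemma \ref{Lem_polyhedra}, continuity of $g$, compactness of $B$, and the small-$\eta$ assumption—combine mechanically, so I do not expect further difficulty.
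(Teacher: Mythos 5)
Your proof is correct, but it takes a genuinely different route from the paper's. The paper proves the proposition constructively: invoking \textbf{Assumption 2}, it identifies $n$ explicit vertices $\textbf{x}^k$ of $Z_u$ as the solutions of the perturbed linear systems $A_k\textbf{x}=\textbf{c}+\delta_c$, bounds $\|\textbf{x}^k-\textbf{x}^\ast\|_2\le \lambda_m\sqrt{n}\,\eta\delta^2/8$ via $\|A_k^{-1}\|_2$, and concludes interiority of each vertex from interiority of $\textbf{x}^\ast$ for $\eta$ small. You instead argue softly: the excess $g(\textbf{x})=\max_{\textbf{y}}\textbf{x}^TA\textbf{y}-v^\ast$ is nonnegative, vanishes only at the unique minimax strategy $\textbf{x}^\ast$ (here uniqueness in \textbf{Assumption 1} is used, correctly, via the product structure of zero-sum equilibria), and $\textbf{x}^\ast$ is interior, so by continuity and compactness $g\ge\epsilon>0$ on the simplex boundary; since $Z_u=\{g\le\eta\delta^2/8\}$, taking $\eta<8\epsilon/\delta^2$ separates $Z_u$ from the boundary. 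Your approach buys two things: it dispenses with \textbf{Assumption 2} entirely, and it covers \emph{all} vertices of $Z_u$ at once — the paper only verifies that the $n$ points $\textbf{x}^k$ are vertices, not that no further vertices arise from the active constraints $x_i\ge 0$, whereas your separation argument makes that question moot (indeed, compactness of $Z_u$ alone would finish the job without the vertex decomposition at all). What the paper's route buys in exchange is an explicit, quantitative threshold on $\eta$ in terms of $\lambda_m$ and $\delta$, and an explicit $O(\eta)$ localization of $Z_u$ around $\textbf{x}^\ast$, whereas your $\epsilon=\min_B g$ is obtained nonconstructively (though it is in principle computable as a minimum of a piecewise-linear function over finitely many faces).
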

(Proof in Appendix \ref{Adx_lem_interior})

Based on Proposition \ref{Lem_interior}, we can obtain the following corollary.
\begin{corollary}\label{Lem_zp_bounded}
    In the Hedge-myopic system, if $\textbf{x} \in Z_u$, then $Q(\mathbf{x})$ is upper bounded, i.e., there exists $M_p > 0$ such that $Q(\mathbf{x}) \leq M_p$ for all $\mathbf{x}\in Z_u$.
\end{corollary}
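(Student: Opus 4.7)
The plan is to reduce this corollary immediately to Proposition~\ref{Lem_interior}. By that proposition, there exists a single $\varepsilon_d>0$, depending only on the payoff matrix $A$ and the learning rate $\eta$, such that for every $\textbf{x}\in Z_u$ and every $i\in\mathcal{I}$ we have $x_i>\varepsilon_d$. Since $\ln$ is monotonically increasing, this gives the pointwise estimate $-\ln x_i<-\ln\varepsilon_d$ for each coordinate.

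Plugging this into the definition $Q(\textbf{x})=-\sum_{i=1}^n x_i^{\ast}\ln x_i$ from \eqref{Def_Q(x)} and using $x_i^{\ast}\geq 0$ together with $\sum_{i=1}^n x_i^{\ast}=1$, I would conclude
\begin{equation*}
    Q(\textbf{x})=-\sum_{i=1}^n x_i^{\ast}\ln x_i\,<\,-\sum_{i=1}^n x_i^{\ast}\ln\varepsilon_d\,=\,-\ln\varepsilon_d.
\end{equation*}
One can then just set $M_p:=-\ln\varepsilon_d$ (which is positive since $\varepsilon_d<1$, as each $x_i<1$ on the simplex's interior) and the claimed bound $Q(\textbf{x})\leq M_p$ holds uniformly over $Z_u$.

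There is no real obstacle here: the work has already been done inside Proposition~\ref{Lem_interior}, whose role is precisely to push $Z_u$ away from the boundary of $\Delta(\mathcal{I})$ where $Q$ can blow up. The only thing to verify carefully in writing is that the constant $\varepsilon_d$ supplied by Proposition~\ref{Lem_interior} is \emph{uniform} in $\textbf{x}\in Z_u$ (not just a constant depending on the particular point $\textbf{x}$); this is what makes the bound independent of $\textbf{x}$ and turns the pointwise finiteness of $Q$ into a uniform upper bound $M_p$.
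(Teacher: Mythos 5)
Your proof is correct and takes exactly the same route as the paper: invoke the uniform lower bound $\varepsilon_d$ from Proposition~\ref{Lem_interior} and conclude $Q(\mathbf{x})\leq -\sum_{i=1}^n x_i^\ast\ln\varepsilon_d=-\ln\varepsilon_d=:M_p$. Your added remark about the uniformity of $\varepsilon_d$ over $Z_u$ is the right thing to check, and it is indeed what Proposition~\ref{Lem_interior} supplies.
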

\begin{proof}
    By Proposition \ref{Lem_interior}, we know that for all $\textbf{x}\in Z_u$, 
    \begin{equation*}
        Q(\textbf{x}) = -\sum\limits_{i=1}^n x_i^\ast \ln x_i \leq -\sum\limits_{i=1}^n x_i^\ast \ln \varepsilon_d = -\ln \varepsilon_d,
    \end{equation*}
    i.e., the function $Q(\textbf{x})$ is upper bounded in the region $Z_u$. 
  \end{proof}  
  
Since $Z_p \subset Z_u$, we immediately prove that $Q(\textbf{x})$ is also upper bounded by $M_p$ in the region $Z_p$. 

Corollary \ref{Lem_zp_bounded} indicates the boundness of $Q(\textbf{x})$ over $Z_p$ and $Z_u$, which is a property in a spatial sense. The subsequent proof of Theorem \ref{Lem_Qsequence_bounded}  demonstrates that the boundness in the spatial sense actually implies boundness in the temporal sense. 

\begin{proof}[Proof of Theorem \ref{Lem_Qsequence_bounded}]
    By Corollary \ref{Lem_zp_bounded}, if $\textbf{x} \in Z_u $, then $Q(x)\leq M_p $.
    
    Consider the sequence $\textbf{x}_t$ in the Hedge-myopic system.  Suppose that for some time $t$, $\textbf{x}_t\in Z_p$. Then, 
    \begin{align}
        Q(\textbf{x}_{t+1}) &= Q(\textbf{x}_t) + \ln \left(  \sum\limits_{i=1}^n x_{i, t}  e^{\eta (v^\ast - e_i^T A \textbf{y}_t)} \right) \nonumber \\
        & \leq M_p +  \ln \left(  \sum\limits_{i=1}^n x_{i, t}  e^{\eta \max\limits_{i\in\mathcal{I}} (v^\ast - e_i^T A \textbf{y}_t)} \right) \nonumber \\
        & = M_p + \eta \max\limits_{i\in\mathcal{I}} (v^\ast - e_i^T A \textbf{y}_t)  \nonumber \\
        & \leq M_p + \eta \delta_u, \label{budengshiQ}
    \end{align}
    where $\delta_u = v^\ast - \min\nolimits_{i\in\mathcal{I}, j\in \mathcal{J}} a_{ij}.$
    
    Now, we consider different cases for the behavior of the strategy sequence $\{\textbf{x}_t\}$. 
    
    \textbf{Case 1:} if the strategy sequence never goes into the region $Z_p$, which implies that $D(\textbf{x}_t) < 0$ for all $t$, then we have $Q(\textbf{x}_t) \leq Q(\textbf{x}_1)$ for all $t$. By calculating, $Q(\textbf{x}_1) = -\sum\nolimits_{i=1}^n x_i^\ast \ln (1/n) = \ln n$. Hence, $Q(\textbf{x}_t) \leq \ln n$ for all $t$.
    
    \textbf{Case 2:} if the strategy sequence goes into the region $Z_p$ at some time $t^\prime$, then for the strategy $\textbf{x}_t$ before time $t^\prime$, we have $Q(\textbf{x}_t) \leq \ln n$ for all $t < t^\prime$. For the strategy $\textbf{x}_t$ after time $t^\prime$:\\ 
    (1). if $\textbf{x}_t\in Z_p$, then we have $Q(\textbf{x}_t) \leq M_p$; \\
    (2). if $\textbf{x}_t \notin Z_p$, then we can find an integer $k > 0$ such that $\textbf{x}_{t-k}\in Z_p$ and $\textbf{x}_{t-j} \notin Z_p$ for $0< j < k$. 
    Combining this with \eqref{budengshiQ}, we can obtain that $Q(\textbf{x}_t) < Q(\textbf{x}_{t-1}) < \cdots < Q_{t-k+1} \leq M_p + \eta \delta_u$ since $\textbf{x}_{t-k}\in Z_p$. 
 
    Take $M_Q = \max \{\ln n, M_p + \eta \delta_u\}$, then the Q-sequence $\{Q_t\}_{t=1}^\infty$ is upper bounded by $M_Q$. 
\end{proof}

From Definition \eqref{Def_Q(x)}, for strategy $\textbf{x}$, if some element $x_{i}$ is near zero, the value of $Q(x)$ is near infinity. Hence, by Theorem \ref{Lem_Qsequence_bounded}, the elements of the stage strategy $\textbf{x}_t$ cannot be too small since the Q-sequence is bounded. Based on this direct intuition, we can further prove the following theorem, which shrinks the range of possible values for $\textbf{x}_t$ to be a finite set. 

\begin{theorem}\label{Lem_finite_values}
     In the Hedge-myopic system, $\textbf{x}_t$ for player X can only take finite values.
\end{theorem}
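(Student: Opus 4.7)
The plan is to exploit the fact that under the myopic best-response rule, each $\textbf{y}_\tau$ is a pure strategy $\textbf{y}^{j_\tau}$, so $e_i^T A \textbf{y}_\tau = a_{i,j_\tau}$ is an entry of the rational matrix $A$. Writing $L_{i,t} = \sum_{\tau=1}^{t-1} a_{i,j_\tau}$, the Hedge update \eqref{Eq_hedge_formula} becomes
\begin{equation*}
    x_{i,t} = \frac{e^{-\eta L_{i,t}}}{\sum_{j=1}^n e^{-\eta L_{j,t}}},
\end{equation*}
so $\textbf{x}_t$ is completely determined by the differences $\Delta_{i,t} := L_{i,t} - L_{1,t}$ (the common shift cancels in numerator and denominator). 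Hence it suffices to show that the vector $(\Delta_{2,t},\ldots,\Delta_{n,t})$ takes only finitely many values as $t$ varies.

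Next I would extract a uniform lower bound on the components of $\textbf{x}_t$ from Theorem \ref{Lem_Qsequence_bounded}. Since $Q_t = -\sum_{i=1}^n x_i^\ast \ln x_{i,t}$, and each $x_i^\ast>0$ (interior NE) while every summand is nonnegative, any single term satisfies $-x_i^\ast \ln x_{i,t} \le Q_t \le M_Q$. This yields
\begin{equation*}
    x_{i,t} \;\ge\; \exp\!\bigl(-M_Q / x_i^\ast\bigr) \;=:\; \varepsilon_0 \;>\; 0
\end{equation*}
uniformly in $i$ and $t$. From $x_{i,t}/x_{1,t} = e^{-\eta \Delta_{i,t}}$, both $x_{i,t}$ and $x_{1,t}$ lie in $[\varepsilon_0,1]$, so $|\Delta_{i,t}| \le \eta^{-1}\ln(1/\varepsilon_0) =: C$ for every $i$ and $t$.

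Finally I would use rationality to discretize. Write each $a_{ij}$ with a common denominator $q \in \mathbb{Z}_{>0}$, so $a_{ij}=p_{ij}/q$ with $p_{ij}\in\mathbb{Z}$. Then $L_{i,t} - L_{1,t} = \frac{1}{q}\sum_{\tau=1}^{t-1}(p_{i,j_\tau} - p_{1,j_\tau})$ lies in $\frac{1}{q}\mathbb{Z}$. Combined with the bound $|\Delta_{i,t}| \le C$, each $\Delta_{i,t}$ lies in the finite set $\frac{1}{q}\mathbb{Z} \cap [-C,C]$, whose cardinality is at most $2\lfloor qC\rfloor+1$. Consequently the tuple $(\Delta_{2,t},\ldots,\Delta_{n,t})$ ranges over a finite set, and so does $\textbf{x}_t$.

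The only nontrivial step is the passage from the scalar bound on $Q_t$ (Theorem \ref{Lem_Qsequence_bounded}) to a componentwise lower bound on $\textbf{x}_t$; this is where the interior-NE assumption ($x_i^\ast>0$ for all $i$) is essential, since otherwise the bound $-x_i^\ast \ln x_{i,t}\le M_Q$ would be vacuous for the vanishing coordinates. The rationality assumption on $A$ then plays its expected role of forcing the trajectory onto a discrete lattice, and the combination of a bounded continuous constraint with a discrete lattice structure immediately produces finiteness.
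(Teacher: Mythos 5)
Your proof is correct and follows essentially the same route as the paper's: both derive a uniform componentwise lower bound on $\textbf{x}_t$ from the bound on $Q_t$, deduce that the cumulative-loss quantities determining $\textbf{x}_t$ are confined to a bounded region, and then use rationality of $A$ to place them on a discrete lattice, forcing finiteness. The only (cosmetic) difference is that you work with the differences $L_{i,t}-L_{1,t}$, whereas the paper normalizes via $R_{i,t}=\sum_{\tau}(v^\ast-e_i^TA\textbf{y}_\tau)$ and the identity $\sum_i x_i^\ast R_{i,t}=0$; your version is marginally cleaner since it never needs to invoke rationality of $v^\ast$.
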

(Proof in Appendix \ref{Appendix_C})


\begin{remark}
    From the proof of Theorem \ref{Lem_finite_values}, it can be observed that when there are irrational number elements in the payoff matrix, Theorem \ref{Lem_finite_values} no longer holds except for the special case such as all elements are rational multiples of the same irrational number. This implies that the rationality of the payoff matrix is nearly an intrinsically necessary condition for periodicity.
\end{remark}

By Theorem \ref{Lem_finite_values}, in the Hedge-myopic system, player X can only adopt a finite number of mixed strategies. This directly leads to the periodicity of the dynamical system as stated below. 

\begin{theorem}\label{Thm_periodic}
    In the Hedge-myopic system with \textbf{Assumption 1} and \textbf{Assumption 2} satisfied, after finite steps, 
    \begin{enumerate}[(1)]
        \item the strategy sequence of player X and player Y enters a cycle (i.e., is periodic);
        \item the time-averaged strategy of player Y in one period is a NE strategy.
    \end{enumerate}
    In other words, there exists $T_s$ and $T$ such that for all $t\geq T_s$, we have $\mathbf{x}_{t+T} = \mathbf{x}_t, \mathbf{y}_{t+T} = \mathbf{y}_t,$ and $\sum\nolimits_{k=1}^T \mathbf{y}_{t+k} / T = \mathbf{y}^\ast$, where $\mathbf{y}^\ast$ is a NE strategy of player Y.
\end{theorem}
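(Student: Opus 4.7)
The plan is to derive (1) from a pigeonhole argument on the finite state space given by Theorem~\ref{Lem_finite_values}, and to derive (2) by extracting an algebraic identity for $\bar{\mathbf{y}}$ from the cyclic condition $\mathbf{x}_{t+T} = \mathbf{x}_t$ through the Hedge formula, then matching it against the interior-NE condition using both Assumptions.

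For (1): Theorem~\ref{Lem_finite_values} places $\{\mathbf{x}_t\}$ in a finite subset of $\Delta(\mathcal{I})$. The map $\mathbf{x}_t \mapsto \mathbf{x}_{t+1}$ is deterministic, since $\mathbf{y}_t = BR(\mathbf{x}_t)$ via \eqref{Eq_yt_formula} and $\mathbf{x}_{t+1}$ is then pinned down by \eqref{Eq_xt+1_and_xt_yt}. A deterministic dynamical system on a finite state space is eventually periodic by pigeonhole: one obtains $t_0 < t_0 + T$ with $\mathbf{x}_{t_0} = \mathbf{x}_{t_0+T}$, and determinism propagates this forward to $\mathbf{x}_t = \mathbf{x}_{t+T}$ for all $t \geq T_s := t_0$; since $\mathbf{y}_t = BR(\mathbf{x}_t)$, the Y-sequence inherits the same period.

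For (2): Rewrite \eqref{Eq_hedge_formula} as $x_{i,t} \propto \exp(-\eta L_{i,t})$ with cumulative loss $L_{i,t} = \sum_{\tau=1}^{t-1} e_i^T A \mathbf{y}_\tau$. The identity $\mathbf{x}_{t+T} = \mathbf{x}_t$ forces the two unnormalized weight vectors to differ only by a global scalar, so the increment $L_{i,t+T} - L_{i,t} = \sum_{\tau=t}^{t+T-1} e_i^T A \mathbf{y}_\tau$ must be independent of $i$. Calling the common value $S$ and dividing by $T$ yields the key algebraic identity
\begin{equation*}
A \bar{\mathbf{y}} = c \mathbf{1}, \qquad c := S/T, \qquad \bar{\mathbf{y}} := \frac{1}{T}\sum_{k=1}^T \mathbf{y}_{t+k} \in \Delta(\mathcal{J}).
\end{equation*}
Since the interior NE satisfies $A \mathbf{y}^* = v^* \mathbf{1}$, the rest of (2) reduces to showing $(\bar{\mathbf{y}}, c) = (\mathbf{y}^*, v^*)$.

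The hard part will be this identification step, which requires Assumptions~1 and~2 working jointly. Assumption~1 yields $\ker(A) \cap \{\mathbf{1}^T y = 0\} = \{0\}$, because any nonzero element of this intersection would perturb $\mathbf{y}^*$ into a second interior NE. Assumption~2 yields $\ker(A^T) \cap \{\mathbf{1}^T v = 0\} = \{0\}$, because any nonzero $v$ there satisfies $A_i v = 0$ (its first $n-1$ coordinates are $(A^T v)_j = 0$ for $j \neq i$ and its last coordinate is $\mathbf{1}^T v = 0$), contradicting non-singularity of $A_i$. I would then case-split on invertibility of $A$: if $A$ is nonsingular, $\bar{\mathbf{y}} = c A^{-1}\mathbf{1}$ and the simplex constraint pins down $c = 1/(\mathbf{1}^T A^{-1}\mathbf{1}) = v^*$ and $\bar{\mathbf{y}} = \mathbf{y}^*$; if $A$ is singular, the range condition $c\mathbf{1} \in \mathrm{range}(A) = \ker(A^T)^\perp$ together with Assumption~2 forces $c = 0$, the same reasoning applied to $v^*\mathbf{1}$ forces $v^* = 0$, and Assumption~1 reduces $\ker(A) \cap \Delta(\mathcal{J})$ to the single point $\mathbf{y}^*$, giving $\bar{\mathbf{y}} = \mathbf{y}^*$ again. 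Note that a pure no-regret averaging argument would only deliver $c \geq v^* - O(\eta)$, so it is essential to route through the algebraic structure of $A\bar{\mathbf{y}} = c\mathbf{1}$ to get the exact equality claimed.
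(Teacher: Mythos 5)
Your proposal is correct and follows essentially the same route as the paper: pigeonhole on the finite state space from Theorem~\ref{Lem_finite_values} plus determinism of the update for periodicity, and then the ratio identity $x_{i,t+T}/x_{n,t+T}=x_{i,t}/x_{n,t}$ yielding $A\bar{\mathbf{y}}=c\mathbf{1}$, which is exactly the content of Lemma~\ref{Lem_X_cycle_to_Y_cycle}. Your final identification step (splitting on invertibility of $A$ and using Assumption~2 via $\ker(A^T)\cap\{\mathbf{1}^Tv=0\}=\{0\}$ to force $c=v^\ast$ and $\bar{\mathbf{y}}=\mathbf{y}^\ast$) is in fact more explicit than the paper, which asserts this conclusion directly from uniqueness of the interior equilibrium.
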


Below we give an example to illustrate Theorem \ref{Thm_periodic}.

\begin{figure}
    \centering
    \includegraphics[width = \textwidth]{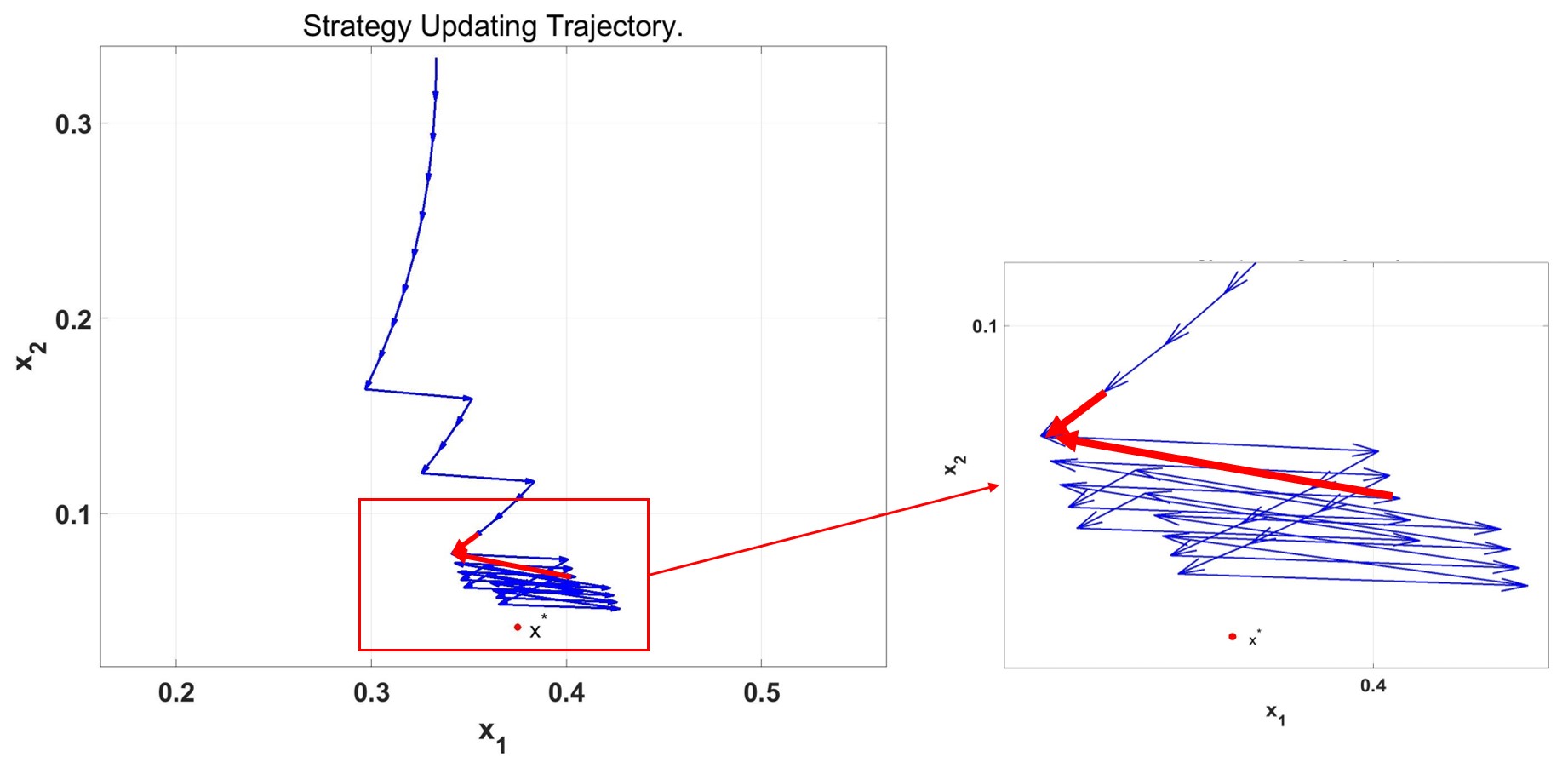}
    \caption{The evolving path of the strategy for player X. The red point represents the NE strategy of player X.}
    \label{fig_evolving_path}
\end{figure}

\begin{example}\label{ex_theorem_explain}
Consider a $3\times 3$ zero-sum game and the payoff matrix is taken as 
    \begin{equation*}
        A = \begin{pmatrix}
            -2 & 1 & 3 \\
            1 & 2 & -2 \\
            2 & 0 & -1
        \end{pmatrix},
    \end{equation*}
    and $\eta = \frac{\ln3}{625}$. In the Hedge-myopic system for this game, the evolution of $\textbf{x}_t$ is shown in Figure \ref{fig_evolving_path}. The X-axis represents the first element $x_{1,t}$, and the Y-axis represents the second element $x_{2,t}$. 
    
    From Figure \ref{fig_evolving_path}, we can observe that basically $\textbf{x}_t$ gradually approaches the NE strategy of player X. However, after reaching a certain range, $\textbf{x}_t$ enters a cycle. In Figure \ref{fig_evolving_path}, we mark a point such that two red arrows both point to it, meaning that a cycle is formed. Additionally, we can see that $\textbf{x}_t$ does not converge to his NE strategy, no matter how long the game is repeated.
\end{example}

Before proving Theorem \ref{Thm_periodic}, we give Lemma \ref{Lem_X_cycle_to_Y_cycle} below by which we only need to prove the periodicity of $\textbf{x}_t$ in order to prove the periodicity of the system. 

\begin{lemma}\label{Lem_X_cycle_to_Y_cycle}
    If $\textbf{x}_t$ enters a cycle, then $\textbf{y}_t$ also enters a cycle and her time-averaged strategy in a single cycle is a NE strategy.
\end{lemma}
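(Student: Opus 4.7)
The plan is as follows. The periodicity of $\mathbf{y}_t$ is almost immediate: since $\mathbf{y}_t = BR(\mathbf{x}_t)$ is a deterministic function of $\mathbf{x}_t$ (with lexicographic tie-breaking as in \eqref{Eq_yt_formula}), the identity $\mathbf{x}_{t+T} = \mathbf{x}_t$ for $t \geq T_s$ is preserved under $BR(\cdot)$, giving $\mathbf{y}_{t+T} = \mathbf{y}_t$ with the same $T_s$ and $T$. All the substance of the lemma lies in identifying the period-average $\bar{\mathbf{y}} := \frac{1}{T} \sum_{k=1}^T \mathbf{y}_{t+k}$ with the NE strategy $\mathbf{y}^\ast$.

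My approach is to telescope the Hedge update \eqref{Eq_xt+1_and_xt_yt} across one full period. Iterating from stage $t$ to $t+T$ yields
\begin{equation*}
x_{i,t+T} = x_{i,t} \cdot \frac{\exp\bigl(-\eta T \cdot e_i^T A \bar{\mathbf{y}}\bigr)}{\prod_{k=0}^{T-1} Z_{t+k}}, \qquad i = 1, \ldots, n,
\end{equation*}
where the stage-$s$ normalizer $Z_s := \sum_j x_{j,s} \exp(-\eta e_j^T A \mathbf{y}_s)$ is independent of $i$. Since $\mathbf{x}_{t+T} = \mathbf{x}_t$ and each $x_{i,t} > 0$ by \eqref{Eq_hedge_formula}, cancelling and taking logarithms collapses the display to
\begin{equation*}
e_i^T A \bar{\mathbf{y}} = v \quad \text{for all } i \in \mathcal{I},
\end{equation*}
i.e.\ $A \bar{\mathbf{y}} = v \mathbf{1}$ for some constant $v$ determined by the cycle. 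Note also that $\bar{\mathbf{y}} \in \Delta(\mathcal{J})$ automatically, as a convex combination of pure strategies.

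Finally, a best-response check closes the argument. The dual indifference principle applied at the interior NE gives $A^T \mathbf{x}^\ast = v^\ast \mathbf{1}$ (because $\mathbf{y}^\ast$ has full support), so $(\mathbf{x}^\ast)^T A \mathbf{y} = v^\ast$ for every $\mathbf{y} \in \Delta(\mathcal{J})$. In particular $\bar{\mathbf{y}}$ maximizes $(\mathbf{x}^\ast)^T A \mathbf{y}$ over $\Delta(\mathcal{J})$, and meanwhile $A \bar{\mathbf{y}} = v \mathbf{1}$ already makes every $\mathbf{x} \in \Delta(\mathcal{I})$ a best response to $\bar{\mathbf{y}}$. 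Thus $(\mathbf{x}^\ast, \bar{\mathbf{y}})$ is a Nash equilibrium, and uniqueness of the interior NE in \textbf{Assumption 1} forces $\bar{\mathbf{y}} = \mathbf{y}^\ast$. The main obstacle I anticipate is precisely this last step: one has to combine the dual indifference principle with the uniqueness assumption to upgrade the algebraic identity $A \bar{\mathbf{y}} = v \mathbf{1}$ into a genuine equilibrium statement, rather than attempting a direct linear-algebra uniqueness argument (which would instead lean on \textbf{Assumption 2} via the non-singularity of the bordered matrix associated with the $A_i$'s). Everything else is a one-line telescoping identity.
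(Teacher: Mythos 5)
Your proposal is correct and follows essentially the same route as the paper: both arguments use the fact that $\mathbf{x}_{t+T}=\mathbf{x}_t$ forces the multiplicative Hedge weights accumulated over one period to agree across all rows (the paper via ratios $x_{i,t+T}/x_{n,t+T}=x_{i,t}/x_{n,t}$, you via telescoping the normalizers), yielding $A\bar{\mathbf{y}}=v\mathbf{1}$. Your closing step --- using the indifference condition $A^T\mathbf{x}^\ast=v^\ast\mathbf{1}$ to show $(\mathbf{x}^\ast,\bar{\mathbf{y}})$ is an equilibrium and then invoking uniqueness --- is a correct and welcome elaboration of a deduction the paper states without justification.
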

(Proof in appendix \ref{Appendix_A})

Now, we can prove Theorem \ref{Thm_periodic}.
\begin{proof}[Proof of Theorem \ref{Thm_periodic}]
    By Lemma \ref{Lem_finite_values}, $\textbf{x}_t$ can only take finite values. Then, using the pigeonhole principle, we can obtain that there must exist two stages, $t_1 < t_2$, such that $\textbf{x}_{t_1} = \textbf{x}_{t_2}$ because the game is repeated for infinite times. Since $\textbf{x}_{t+1}$ is fully determined by $\textbf{x}_t$, we have $\textbf{x}_{t_1+k} = \textbf{x}_{t_2+k}, \ \forall\ k = 1, 2, \cdots$, which implies that $\textbf{x}_t$ is periodic from time $t_1$. Combining Lemma \ref{Lem_X_cycle_to_Y_cycle}, we can prove Theorem \ref{Thm_periodic}.
\end{proof}


\begin{remark}
    The time required to enter a cycle depends on the parameter $\eta$. Due to the complexity of the problem, it is difficult to provide an explicit expression for it. We have studied it for $2\times 2$ games in \cite{guo2023taking} where the time needed for the strategy sequences of both players to enter a cycle is $O(1/\eta)$.  
\end{remark}

\begin{remark}
    In addition to periodicity, it is worth noting that the time-averaged strategy of player Y in a single cycle is a \textbf{precise} NE strategy! Compared with this, when both players adopt the no-regret learning algorithm in two-player zero-sum games, their time-averaged strategy profile converges to a NE when the time horizon goes to infinity, that is to say, only an approximate NE can be obtained. Moreover, in the Hedge-myopic system, not only can we obtain a precise NE, but we also only need to compute the time-averaged strategy in a single cycle whose length is far shorter than the whole time horizon.
\end{remark}


\subsection{Non-periodicity Examples without Rational Interior NE}

In this section, we study the dynamic of the Hedge-myopic system for general games by examples. These games may have an unique irrational interior equilibrium, or have more than one interior equilibria, or have an unique non-interior equilibrium. We will see that the dynamics of the Hedge-myopic systems can vary significantly for different games, preventing a consistent result. 

First, we give Example \ref{ex_irrational_case} to show that when the NE of the game is irrational, the periodicity of the strategy sequence no longer holds.

\begin{figure}[htbp]
  \centering
  \begin{subfigure}[b]{0.95\textwidth}
    \includegraphics[width=\textwidth]{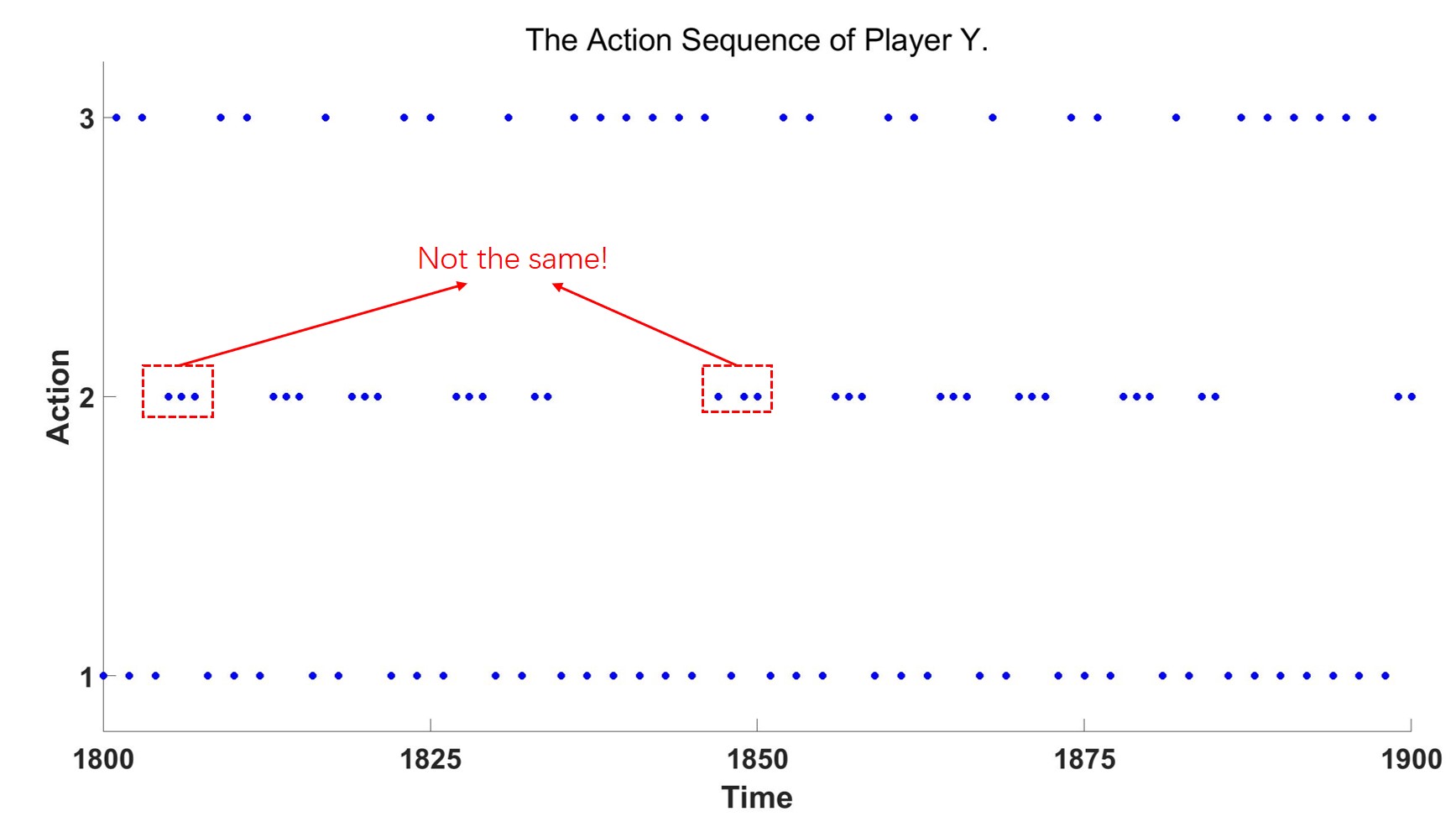}
    \caption{The action sequence is not periodic when there are irrational numbers in the payoff matrix.}
    \label{fig_irrational_case}
  \end{subfigure}
  \begin{subfigure}[b]{0.95\textwidth}
    \includegraphics[width=\textwidth]{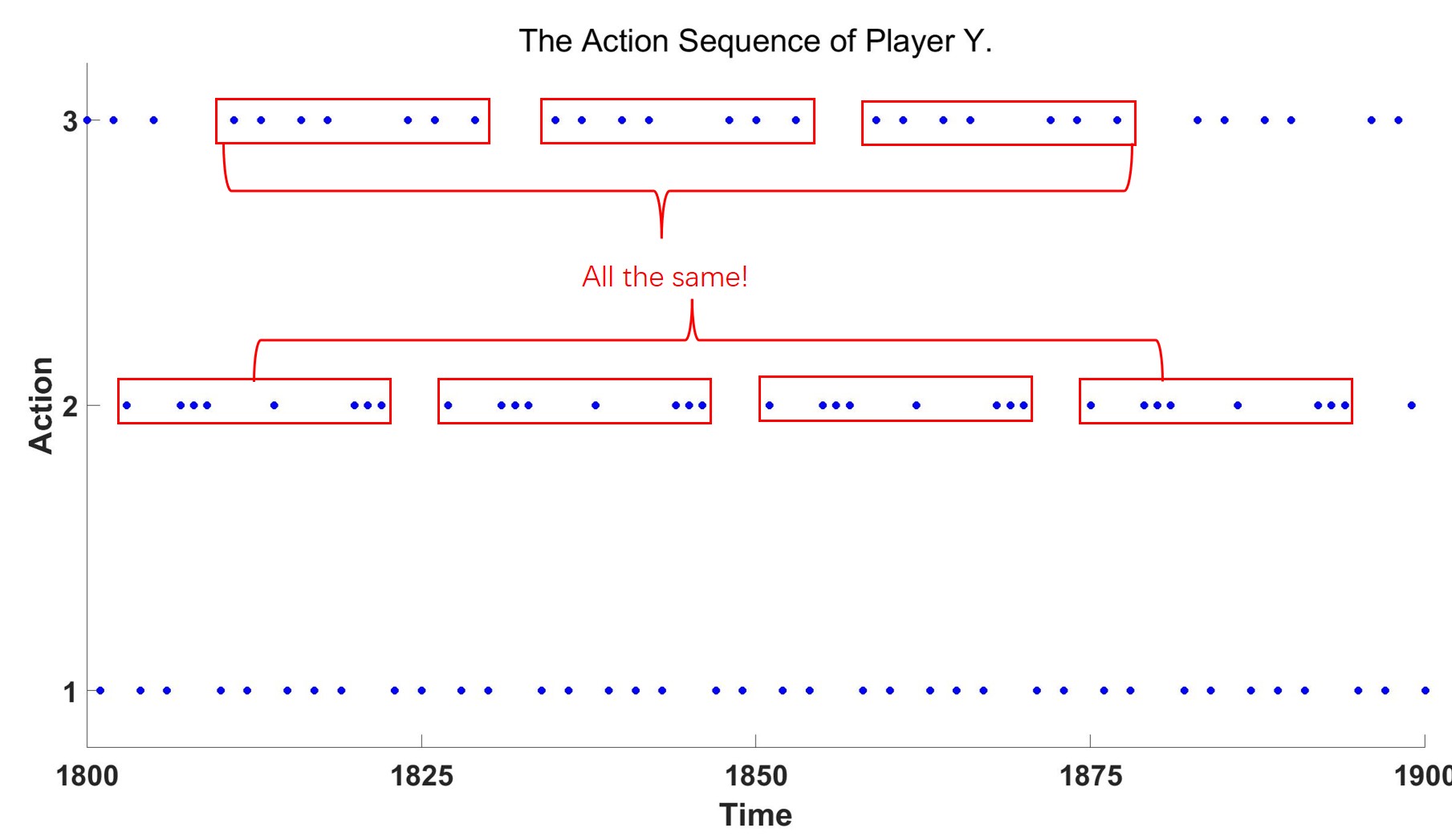}
    \caption{The action sequence is periodic when the elements of payoff matrix are all rational numbers.}
    \label{fig_rational_case}
  \end{subfigure}
  \caption{The action sequence of player Y in two different games.}
  \label{fig_not_periodic_example}
\end{figure}

\begin{example}\label{ex_irrational_case}
    Consider a $3\times 3$ zero-sum game with the payoff matrix for player Y being 
    \begin{equation*}
        \begin{pmatrix}
            -2 & \sqrt{2} & 3 \\
            1 & 2 & -2 \\
            2 & 0 & -1
        \end{pmatrix},
    \end{equation*}
    in which there is an irrational number $\sqrt{2}$. For this game, in the Hedge-myopic system, the action sequence $\textbf{y}_t$ is shown by Figure \ref{fig_irrational_case}. We only present the part that lies between stage 1800 and 1900. 
    
    For comparison,  we also show the action sequence $\textbf{y}_t$ for the game in Example \ref{ex_theorem_explain} by Figure \ref{fig_rational_case}. Comparing these two figures, we can observe that the action sequence is periodic when the elements of payoff matrix are all rational numbers, while the action sequence is not periodic when there are irrational numbers in the elements of payoff matrix.
\end{example}



In the Hedge-myopic system, since player Y can only take pure strategies, the averaged strategy has only rational elements, so it is impossible for the averaged strategy to be a NE strategy when there are irrational numbers in the NE strategy. This explains why the periodicity no longer holds. 

Below we present Example \ref{Exp_non_interior} and Example \ref{Exp_special_case} to illustrate that when the game does not admit interior equilibrium, the periodicity of system dynamics also no longer holds, and periodicity only exists in special games. 

\begin{example}[No interior equilibrium and no cycle]\label{Exp_non_interior}
    Consider a $3 \times 3$ zero-sum game and let the payoff matrix of player Y be
    \begin{equation*} A = 
        \begin{pmatrix}
            -2 & 1 & 3 \\
            1 & -1 & 2 \\
            0 & 2 & -1
        \end{pmatrix}.
    \end{equation*}
    The NE strategy for player X is $(0, 1/2, 1/2)$, which is non-interior, and the equilibrium strategy for player Y is not unique. The evolving path of $\textbf{x}_t$ is shown by Figure \ref{fig_non_interior_case}, from which we can see that the first element $x_{1,t}$ continually decreases and the repeated game does not enter a cycle. 
\end{example}

\begin{figure}[htbp]
  \centering
  \includegraphics[width=\textwidth]{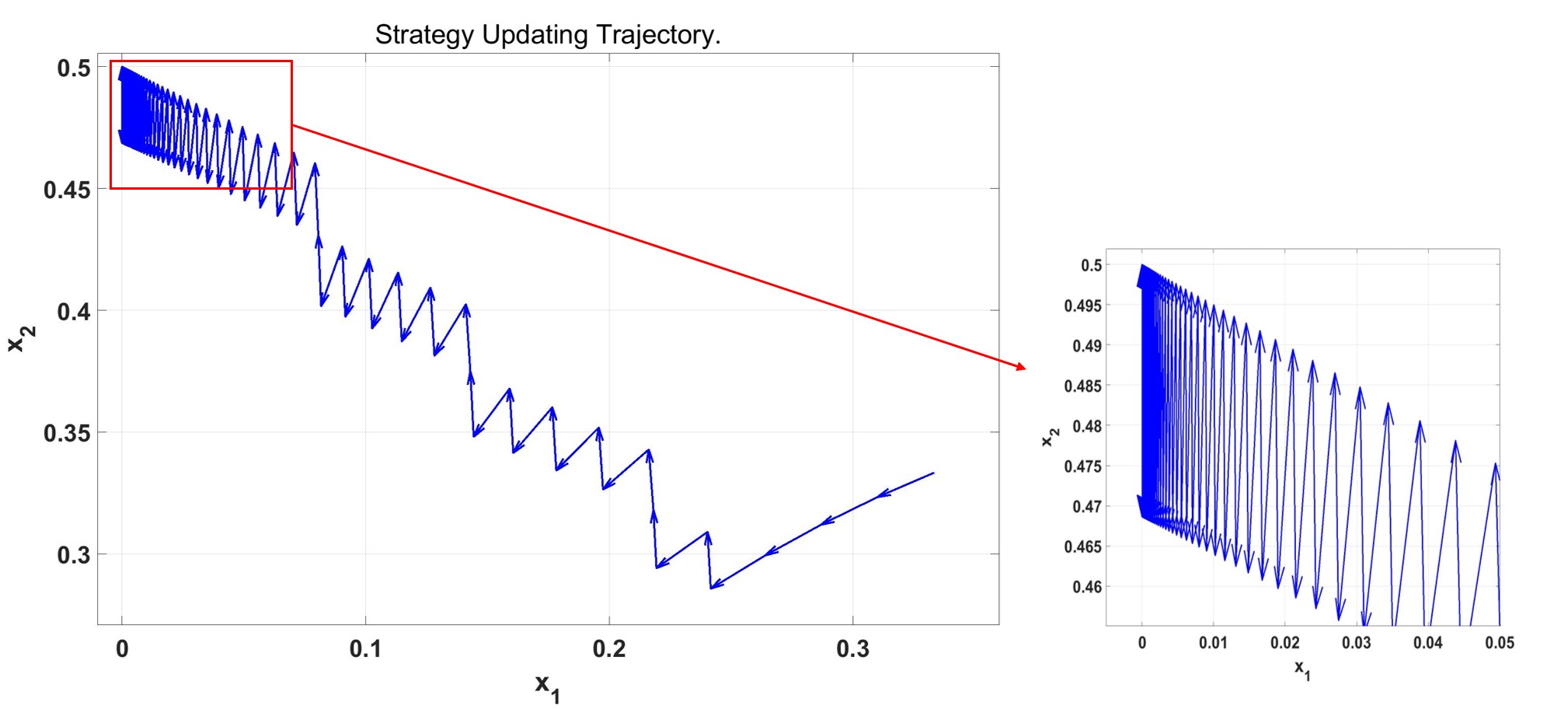}
  \caption{The evolution of $\textbf{x}_t$ in Example \ref{Exp_non_interior}.}
  \label{fig_non_interior_case}
\end{figure}

\begin{example}[No interior equilibrium but with a special cycle]\label{Exp_special_case}
    Consider the $3 \times 3$ game with a payoff matrix being
    \begin{equation*} A = 
        \begin{pmatrix}
            2 & -1 & 0 \\
            -1 & 1 & -2 \\
            -3 & -2 & 1
        \end{pmatrix}.
    \end{equation*}
    In this game, player X has both interior and non-interior NE strategy. The evolving path of $x_t$ is shown in Figure \ref{fig_special_case}, from which we can see that the repeated game does enter a cycle although the assumptions in Theorem \ref{Thm_periodic} does not hold. 
\end{example}

\begin{figure}[htbp]
  \centering
  \begin{subfigure}[b]{0.45\textwidth}
    \includegraphics[height=0.96\textwidth, width=\textwidth]{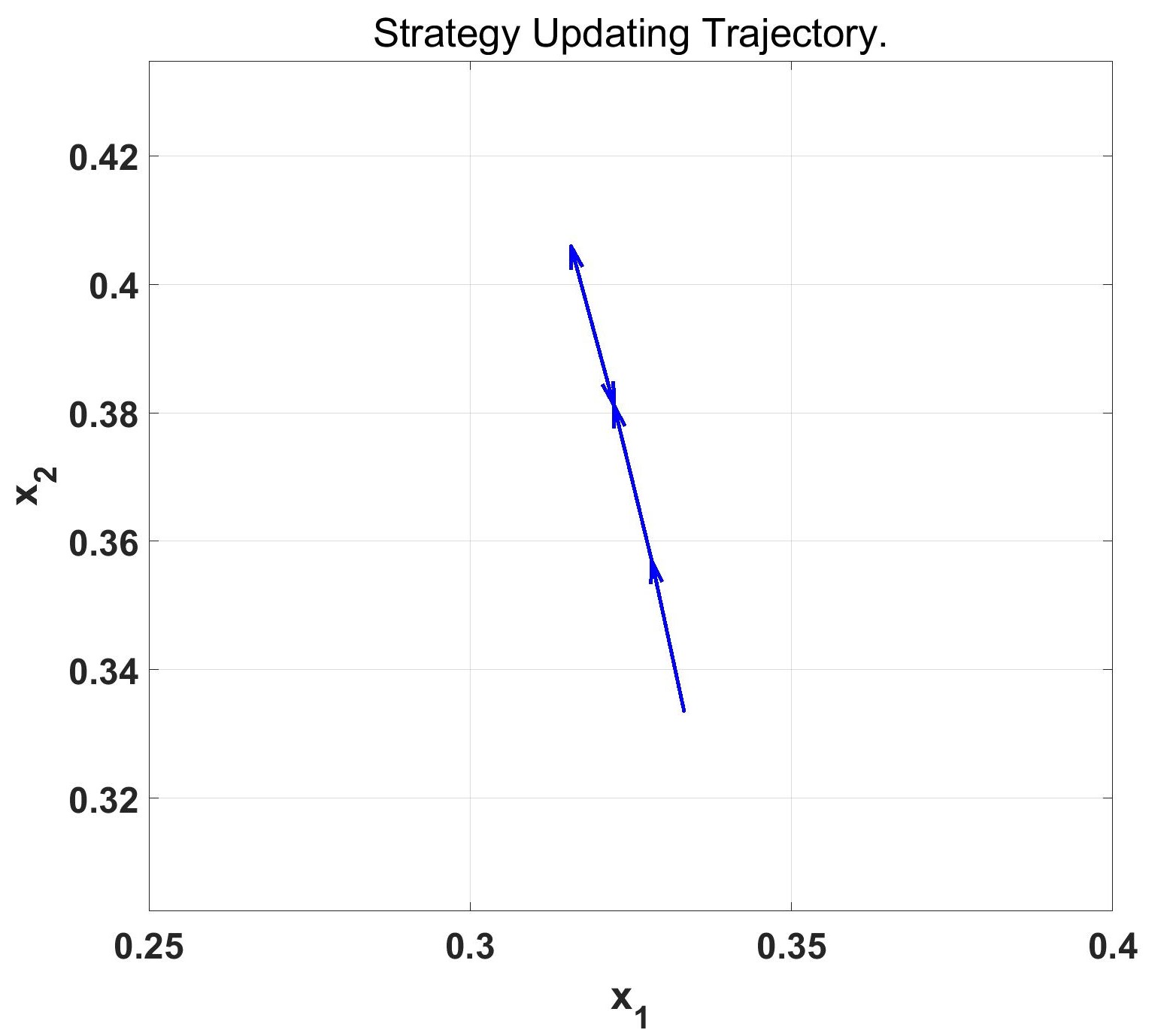}
    \caption{The evolution of $x_{1,t}$.}
    \label{fig_special_case_a}
  \end{subfigure}
  \begin{subfigure}[b]{0.45\textwidth}
    \includegraphics[height=0.99\textwidth, width=\textwidth]{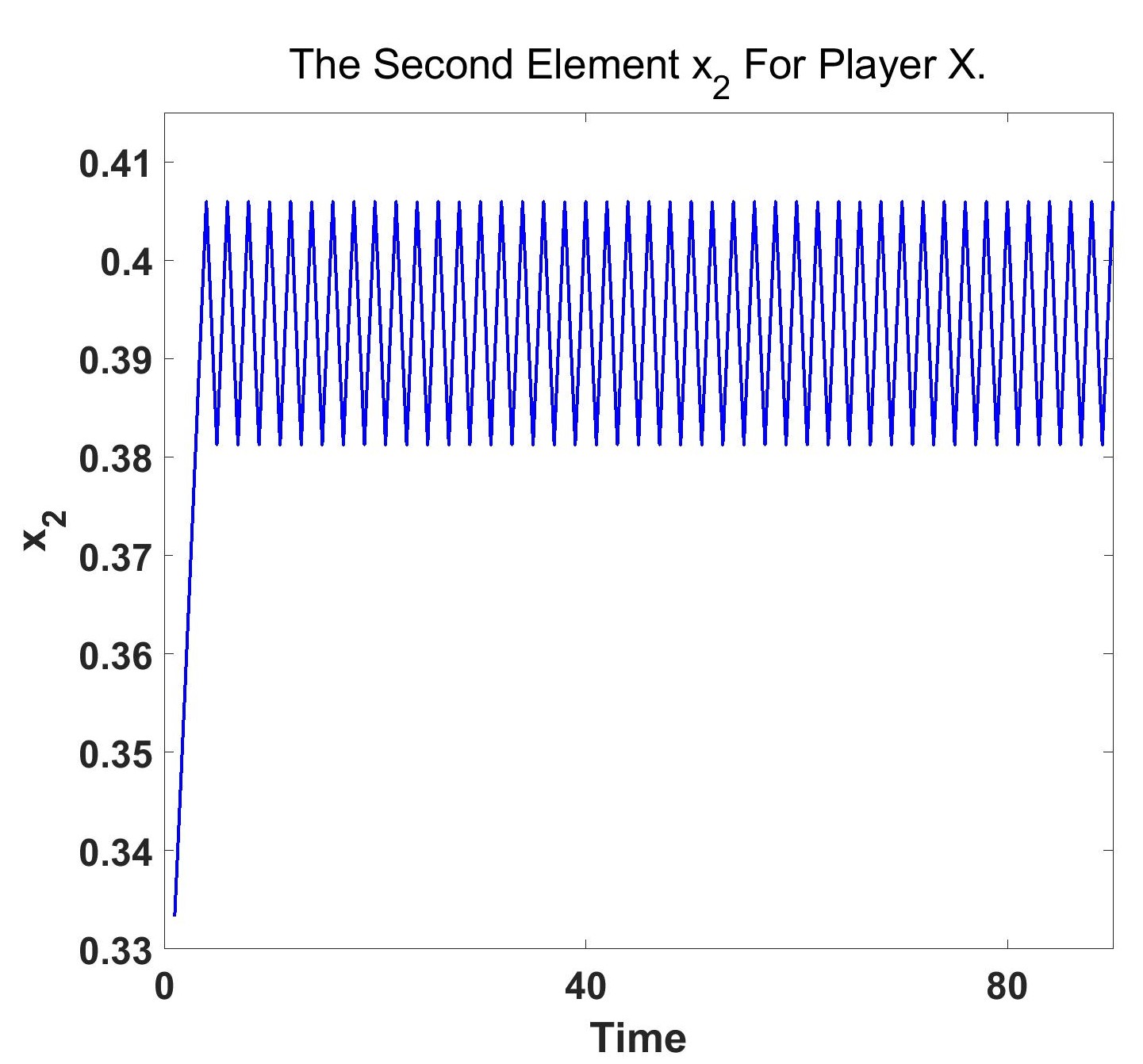}
    \caption{The evolution of $x_{2, t}$.}
    \label{fig_special_case_b}
  \end{subfigure}
  \caption{The evolution of $x_t$ in the Hedge-myopic system for Example \ref{Exp_special_case}.}
  \label{fig_special_case}
\end{figure}

Generally, for all the two-player games, we can approximate the NE no matter whether the Hedge-myopic system enters a cycle or not. We state it in Theorem \ref{Lem_HBR_convergence} below, whose proof can also be found in \cite{Li2023-gd}.

\begin{theorem}\label{Lem_HBR_convergence}
    In a repeated two-player zero-sum game with $T$ stages, suppose player X employs the Hedge algorithm to update his stage strategy with parameter $\eta$ set to be $\sqrt{8\ln n / T}$, where $n$ is the number of his feasible actions, and suppose player Y takes the myopic best response to the stage strategy of player X, then the time-averaged strategy profile converges to the $\sqrt{\ln n/(2T)}-$NE. 
\end{theorem}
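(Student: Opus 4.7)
The plan is to bound the Nash distance $\operatorname{ND}(\bar{\mathbf{x}}_T, \bar{\mathbf{y}}_T)$ of the time-averaged profile $\bar{\mathbf{x}}_T = \tfrac{1}{T}\sum_{t=1}^T \mathbf{x}_t$ and $\bar{\mathbf{y}}_T = \tfrac{1}{T}\sum_{t=1}^T \mathbf{y}_t$ by $\sqrt{\ln n/(2T)}$, which immediately yields the claimed $\varepsilon$-NE guarantee since both $e_x$ and $e_y$ are nonnegative and sum to the Nash distance. The argument will sandwich the average realized payoff $\tfrac{1}{T}\sum_t \mathbf{x}_t^T A \mathbf{y}_t$ between two quantities: on one side, player X's no-regret property (Theorem \ref{thm_regret_bound}) gives an upper bound in terms of $\bar{\mathbf{y}}_T$, and on the other side, player Y's best-response property gives a lower bound in terms of $\bar{\mathbf{x}}_T$; the minimax structure then ties the two together.

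Concretely, I would first apply Theorem \ref{thm_regret_bound} to player X with losses $\ell(e_i,\mathbf{y}_t) = e_i^T A \mathbf{y}_t$ and the prescribed $\eta = \sqrt{8\ln n / T}$, obtaining
\[
\sum_{t=1}^T \mathbf{x}_t^T A \mathbf{y}_t \;\leq\; \min_{\mathbf{x}\in\Delta(\mathcal{I})}\sum_{t=1}^T \mathbf{x}^T A \mathbf{y}_t \;+\; \sqrt{\tfrac{T \ln n}{2}},
\]
whose right-hand side becomes, after dividing by $T$, $\min_{\mathbf{x}} \mathbf{x}^T A \bar{\mathbf{y}}_T + \sqrt{\ln n/(2T)}$. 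In parallel, since $\mathbf{y}_t = BR(\mathbf{x}_t)$, we have $\mathbf{x}_t^T A \mathbf{y}_t = \max_{\mathbf{y}}\mathbf{x}_t^T A \mathbf{y} \geq \mathbf{x}_t^T A \mathbf{y}$ for every fixed $\mathbf{y}\in\Delta(\mathcal{J})$; averaging over $t$ and then maximizing over $\mathbf{y}$ gives $\tfrac{1}{T}\sum_t \mathbf{x}_t^T A \mathbf{y}_t \geq \max_{\mathbf{y}} \bar{\mathbf{x}}_T^T A \mathbf{y}$. Chaining the two one-sided bounds produces
\[
\max_{\mathbf{y}} \bar{\mathbf{x}}_T^T A \mathbf{y} \;-\; \min_{\mathbf{x}} \mathbf{x}^T A \bar{\mathbf{y}}_T \;\leq\; \sqrt{\tfrac{\ln n}{2T}},
\]
and the left-hand side is precisely $\operatorname{ND}(\bar{\mathbf{x}}_T,\bar{\mathbf{y}}_T) = e_x(\bar{\mathbf{x}}_T,\bar{\mathbf{y}}_T) + e_y(\bar{\mathbf{x}}_T,\bar{\mathbf{y}}_T)$, so each exploitability is bounded by $\sqrt{\ln n/(2T)}$ and $(\bar{\mathbf{x}}_T,\bar{\mathbf{y}}_T)$ is a $\sqrt{\ln n/(2T)}$-NE.

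The main obstacle is not conceptual but notational: Theorem \ref{thm_regret_bound} is stated only for per-stage losses in $[0,1]$, whereas the payoff matrix $A$ in a general zero-sum game need not be normalized. A careful write-up must therefore either assume without loss of generality that the entries of $A$ are rescaled into $[0,1]$ (which is the implicit convention that makes the constants in the theorem match exactly), or track a $\|A\|$-dependent factor after absorbing the scaling into the choice of $\eta$. Aside from this normalization bookkeeping, every step is routine — the construction is the classical coupling of no-regret learning with best-response play, and the explicit constant follows from plugging $\eta = \sqrt{8 \ln n / T}$ into the regret bound of Theorem \ref{thm_regret_bound}.
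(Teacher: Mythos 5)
Your proposal is correct and follows essentially the same route as the paper's own proof: the regret bound of Theorem \ref{thm_regret_bound} with $\eta=\sqrt{8\ln n/T}$ controls player X's side, the myopic best-response property gives the zero-slack inequality on player Y's side, and chaining the two bounds the Nash distance by $\sqrt{\ln n/(2T)}$. Your remark about the $[0,1]$ normalization of the losses is a fair observation that the paper's write-up also leaves implicit, but it does not change the argument.
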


\begin{proof}
    By Theorem \ref{thm_regret_bound}, we know that the upper regret bound for the Hedge algorithm with $\eta = \sqrt{8\ln n/ T}$ is $\sqrt{(T/2)\ln n}$. Then, we have \begin{equation*}
        \frac{1}{T}\sum\limits_{t=1}^T \textbf{x}_t^T A \textbf{y}_t - \min\limits_{\textbf{x}\in \Delta(\mathcal{I})} \textbf{x}^T A \bar{\textbf{y}} = 
        \frac{1}{T}\left( \max\limits_{\textbf{x}\in \Delta(\mathcal{I})} \sum\limits_{t=1}^T \textbf{x}^T (-A) \textbf{y}_t -  \sum\limits_{t=1}^T \textbf{x}_t^T (-A) \textbf{y}_t \right)  \leq \sqrt{\frac{\ln n}{2T}},
    \end{equation*}
    where $\Bar{\textbf{y}} = \sum\nolimits_{t=1}^T (\textbf{y}_t / T)$. Thus, for all $\textbf{x}\in \Delta(\mathcal{I})$, \begin{equation}\label{Eq_Appendix_D_eq1}
        \frac{1}{T}\sum\limits_{t=1}^T \textbf{x}_t^T A \textbf{y}_t - \textbf{x}^T A \bar{\textbf{y}} \leq \sqrt{\frac{\ln n}{2T}}.
    \end{equation}

    Since player Y always takes the myopic best response, we can deduce that for all $\textbf{y}\in \Delta(\mathcal{J})$,
    \begin{equation}\label{Eq_Appendix_D_eq2}
         \bar{\textbf{x}}^T A \textbf{y} -  \frac{1}{T}\left( \sum\limits_{t=1}^T \textbf{x}_t^T A \textbf{y}_t \right) = \frac{1}{T}\left( \sum\limits_{t=1}^T \textbf{x}_t^T A \textbf{y} -  \sum\limits_{t=1}^T \textbf{x}_t^T A \textbf{y}_t \right)  = \frac{1}{T} \sum\limits_{t=1}^T \left(\textbf{x}_t^T A \textbf{y} - \textbf{x}_t^T A \textbf{y}_t\right)  \leq 0,
    \end{equation}
    where $\Bar{\textbf{x}} = \sum\nolimits_{t=1}^T (\textbf{x}_t / T)$. Combining the two inequalities \eqref{Eq_Appendix_D_eq1} and \eqref{Eq_Appendix_D_eq2}, we have 
    \begin{equation}
        \bar{\textbf{x}}^T A \textbf{y} - \textbf{x}^T A \bar{\textbf{y}} \leq \sqrt{\frac{\ln n}{2T}}
    \end{equation}
    for all $\textbf{x}\in \Delta(\mathcal{I})$ and $\textbf{y}\in \Delta(\mathcal{J})$. Taking $\textbf{x} = \bar{\textbf{x}}$, we obtain $\bar{\textbf{x}}^T A \textbf{y} - \bar{\textbf{x}}^T A \bar{\textbf{y}} \leq \sqrt{\ln n/2T}$ for all $\textbf{y}\in \Delta(\mathcal{J})$, while taking $\textbf{y} = \bar{\textbf{y}}$, we obtain that $\bar{\textbf{x}}^T A \bar{\textbf{y}} - \textbf{x}^T A \bar{\textbf{y}} \leq \sqrt{\ln n/2T}$ for all $\textbf{x}\in \Delta(\mathcal{I})$. Hence, the time-averaged strategy profile $(\bar{\textbf{x}}, \bar{\textbf{y}})$ forms a $\sqrt{\ln n/(2T)}-$NE.
\end{proof}


\section{A Novel NE-solving Algorithm and Experiments}\label{sec_HBR}

\subsection{HBR: an Asymmetric Equilibrium-solving Paradigm}

\begin{algorithm}[ht]
\renewcommand{\algorithmicrequire}{\textbf{Input: }}
\renewcommand{\algorithmicensure}{\textbf{Output: }}
\caption{HBR Paradigm}
\label{alg_HBR}
\begin{algorithmic}[1]
\setstretch{1.1}

\REQUIRE $T$: time horizon, $A$: payoff matrix, $n$: number of actions for the row player, $m$: number of actions for the column player

\STATE set $\eta = \sqrt{8\ln n/T}$ 
\STATE initialize $\textbf{c}_t = \textit{zeros}(m, 1)$ \COMMENT{$c_t$ is the count of each action taken by player Y}
\STATE initialize the time-averaged strategies: $\operatorname{TAS}_x = \textit{zeros}(n, 1),\ \operatorname{TAS}_y = \textit{zeros}(m, 1)$ 

\FOR{$t = 1$ to $T$}
    \STATE compute the state vector $s_t \gets A\cdot c_t$
    \STATE normalization: $\textbf{s}_t$ $\gets$ $\textbf{s}_t - \textbf{s}_t(n) \cdot \textit{ones}(n, 1)$ \COMMENT{$\textbf{s}_t(n)$ is the n-th element of $\textbf{s}_t$}
    \STATE check if there exists $t^\prime < t$ such that $\textbf{s}_{t^\prime} = \textbf{s}_t $
    \IF{such $t^\prime$ exists}   
        \STATE compute the average of $\textbf{y}_t$ between time $t^\prime$ and $t$
        \STATE output the average strategy \COMMENT{the average strategy is player Y's exact NE strategy}
        \STATE terminate the \textbf{for} loop
    \ELSE
    \STATE compute the stage strategy $\textbf{x}_t$ for player X based on Equation \eqref{Eq_hedge_formula}
    \STATE compute the best response $\textbf{y}_t$ for player Y based on Equation \eqref{Eq_yt_formula}
    \STATE update the time-averaged strategy $\operatorname{TAS}_x$ of player X:  $ \operatorname{TAS}_x = \frac{t-1}{t} \operatorname{TAS}_x + \frac{1}{t} \textbf{x}_t$ 
    \STATE update the time-averaged strategy $\operatorname{TAS}_y$ of player Y:  $ \operatorname{TAS}_y = \frac{t-1}{t} \operatorname{TAS}_y + \frac{1}{t} \textbf{y}_t$
    \STATE store $\textbf{s}_t$ and $\textbf{y}_t$ 
    \STATE update $\textbf{c}_t$ $\gets$ $\textbf{c}_t + \textbf{y}_t$
    \ENDIF
\ENDFOR

\IF{the \textbf{for} loop is not prematurely terminated}
     \STATE output $\operatorname{TAS}_x$ and $\operatorname{TAS}_y$
\ENDIF

\end{algorithmic}
\end{algorithm}

Based on Theorem \ref{Thm_periodic} and Theorem \ref{Lem_HBR_convergence}, we can propose an asymmetric Nash-equilibrium solving paradigm for two-player zero-sum games. Given the time horizon $T$ and the payoff matrix $A$, let one player employ the Hedge algorithm to update his stage strategy, and let the other player know all the information and take the myopic best response. The paradigm is called \textit{the HBR paradigm} in which \textit{H} stands for the Hedge algorithm of one player and \textit{BR} stands for the best response of the other player.

As we know from Theorem 3.3, we can compute exact NE fast if there is a cycle. So we exert additional efforts to identify whether the strategy sequences enter a cycle. If a cycle is detected, the computation can be terminated, and by Theorem \ref{Thm_periodic}, an exact NE strategy for the player using the myopic best response can be obtained. If the strategy sequences never enter a cycle, we can output the time-averaged strategy over the entire time horizon. By Theorem \ref{Lem_HBR_convergence}, this converges to an approximate NE as the time horizon T increases. By exchanging the updating rules of the players, an exact or approximate NE for the other player can also be obtained. 

So how can we identify a cycle? This can be done through $\{x_t\}$. To be specific, we can record all values of $x_t$, and once $x_{t_1} = x_{t_2}$, then the strategy sequence $x_{t_1}, x_{t_1+1}, \cdots, x_{t_2 - 1}$ forms a cycle. By the proof of Lemma \ref{Lem_X_cycle_to_Y_cycle} and \eqref{Eq_hedge_formula}, the vector 
$s_t = \operatorname{norm}(A\cdot \sum\nolimits_{\tau=1}^{t-1} y_\tau)$ is the other object to detect, where the operator $\operatorname{norm}$ means subtracting each element of a vector by its $n$-th element. Then it can be observed that the detection of a cycle implies that player Y has an interior equilibrium strategy.

The pseudocode for this paradigm is shown in Algorithm \ref{alg_HBR}.

\subsection{Experimental Results}\label{sec_experiments}

We conduct several experiments to show the effectiveness of the HBR paradigm. First, we consider the game whose payoff matrix is set to be
\begin{equation*}
    A = \begin{pmatrix}
    -2 & 1 & 3\\
    1 & 2 & -2 \\
    2 & 0 & -1
    \end{pmatrix}.
\end{equation*}
The equilibrium of this game is an interior equilibrium, which is $((\frac38, \frac{1}{24}, \frac{7}{12}), (\frac38, \frac13, \frac{7}{24}))$. An existing NE solving paradigm via learning is by the self-play of no-regret algorithm, such as Hedge, abbreviated to HSP. Now, we compare the performance of HBR and HSP paradigm.

\begin{figure}[htbp]
  \centering
  \begin{subfigure}[b]{0.45\textwidth}
    \includegraphics[width=\textwidth]{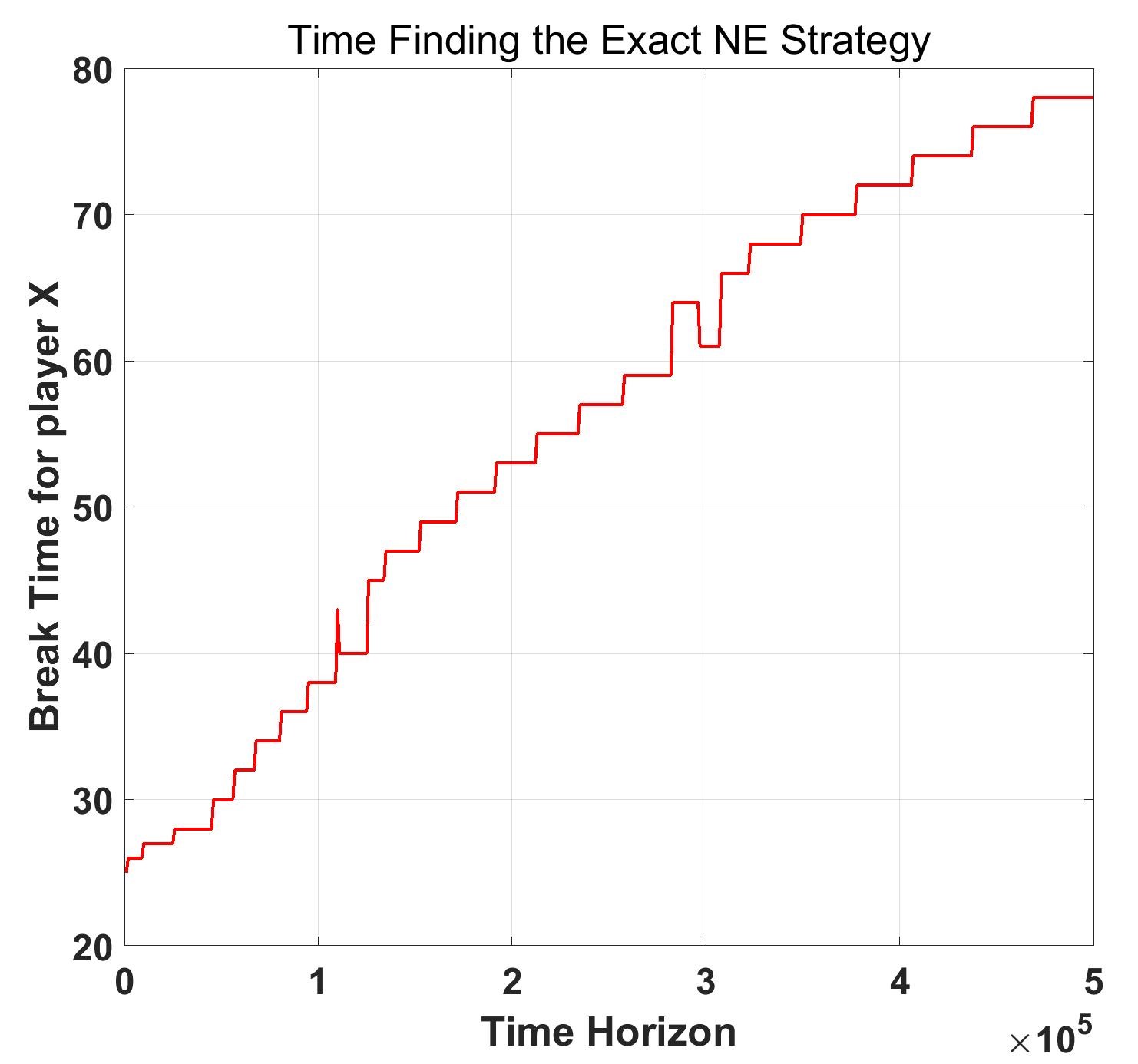}
    \caption{The relationship between the terminal time and the time horizon for computing the NE strategy for player X.}
    \label{fig_stop_time_X}
  \end{subfigure}
  \begin{subfigure}[b]{0.45\textwidth}
    \includegraphics[width=\textwidth]{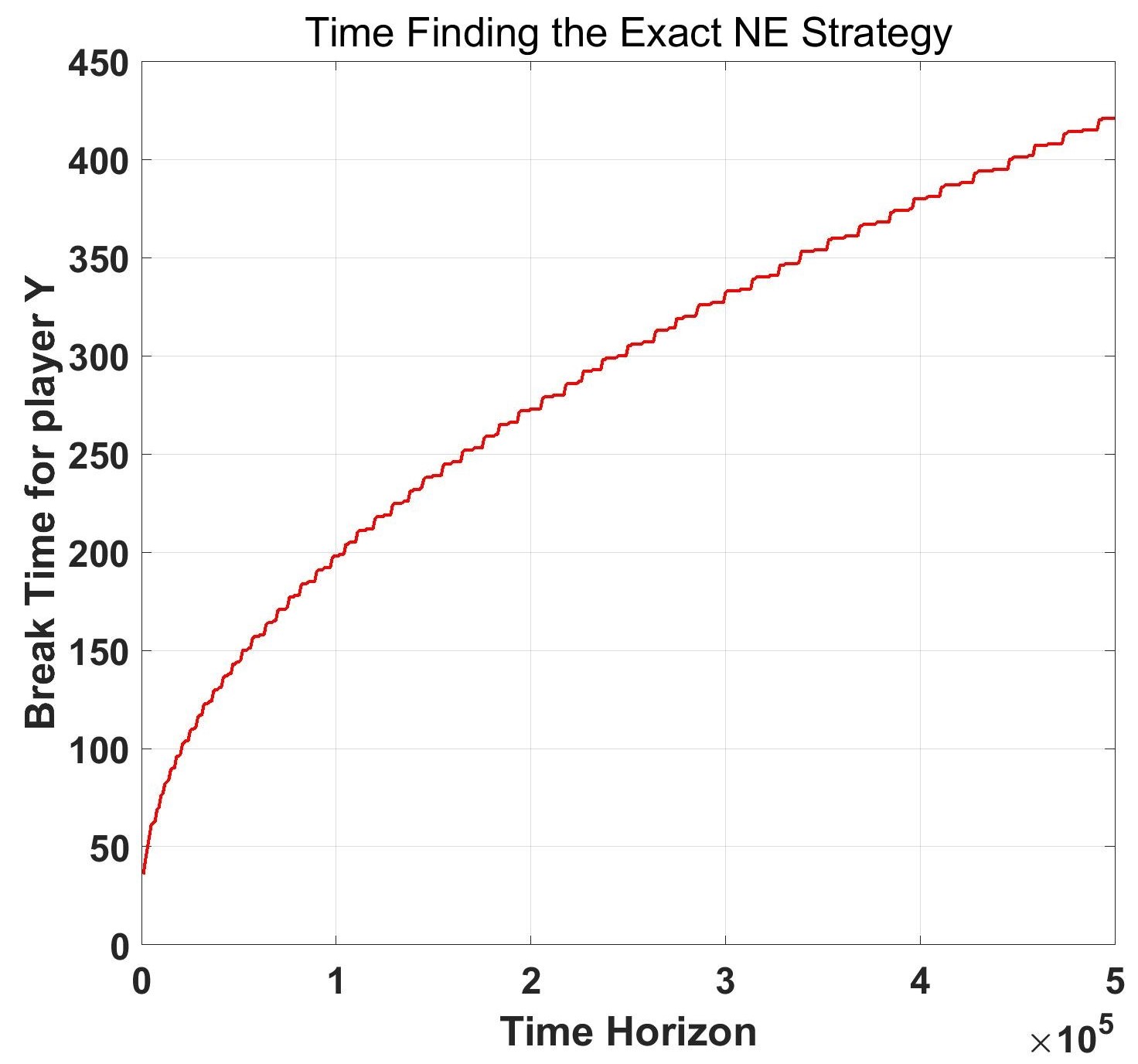}
    \caption{The relationship between the terminal time and the time horizon for computing the NE strategy for player Y.}
    \label{fig_stop_time_Y}
  \end{subfigure}
  \caption{Autonomously terminated time when the game admits an interior equilibrium.}
  \label{fig_stop_time}
\end{figure}

\begin{enumerate}[(1).]
    \item For different time horizons $T$, let $\eta = \sqrt{8\ln 3/ T}$. Figure \ref{fig_stop_time} gives the time needed to get the NE for player X and Y by the HBR paradigm. From Figure \ref{fig_stop_time}, we can see that the HBR paradigm actually enters a cycle very quickly. For this special game, time needed to calculate the NE strategy for player X and Y is different. Basically, needed steps to enter a cycle increases with $\eta$.

    For example, when $T=500000$, calculating the NE strategy for player Y terminates prematurely at time $t\approx 420$, while calculating the NE strategy for player X terminates prematurely at time $t\approx 80$. Basically, we can save a substantial amount of computation. 

    Why is the termination time so different for player X and Y? The reason lies in their NE strategies. For player X, it is $(\frac38, \frac{1}{24}, \frac{7}{12})$, while for player Y,  it is $(\frac38, \frac13, \frac{7}{24})$, which is more balanced among the elements. It is natural that it needs shorter time to enter a cycle corresponding to $(\frac38, \frac13, \frac{7}{24})$. 

    \item For different time horizons $T=1000, 2000, \cdots, 80000$, let $\eta = \sqrt{(8\ln 3)/ T}$. We calculate the $\operatorname{ND}$ of the time-averaged strategy profile and the results are in Figure \ref{fig_ND_compare}. This figure illustrates that, 1) as $T$ increases, the $\operatorname{ND}$ of the time-averaged strategy profile obtained by HBR roughly decreases, indicating that the calculated results approach the NE more closely; 2) compared to HSP, the volatility of the performance of HBR is lower. 
    
    We set $T=3000$ and present the $\operatorname{ND}$ of the time-averaged strategy at each time $t$ in Figure \ref{fig_ND_descending_trend}. We can see that compared to HSP, the ND of the averaged strategy of HBR decreases faster and fluctuates lighter once it stabilizes. 

    Furthermore, we consider another game matrix
    \begin{equation*}
        A = \begin{pmatrix}
        -2 & \sqrt{2} & 3\\
        1 & 2 & -2 \\
        2 & 0 & -1
    \end{pmatrix}.
\end{equation*}
    whose equilibrium is not rational interior. The $\operatorname{ND}$ of the time-averaged strategy profile and the convergence rate are shown in Figure \ref{fig_ND_compare_1} and \ref{fig_ND_descending_trend_1} respectively. In this case, the HBR paradigm still performs better than HSP in the perspective of stability and convergence rate.
\end{enumerate}

\begin{figure}[htbp]
  \centering
  \begin{subfigure}[b]{0.45\textwidth}
    \includegraphics[width=\textwidth]{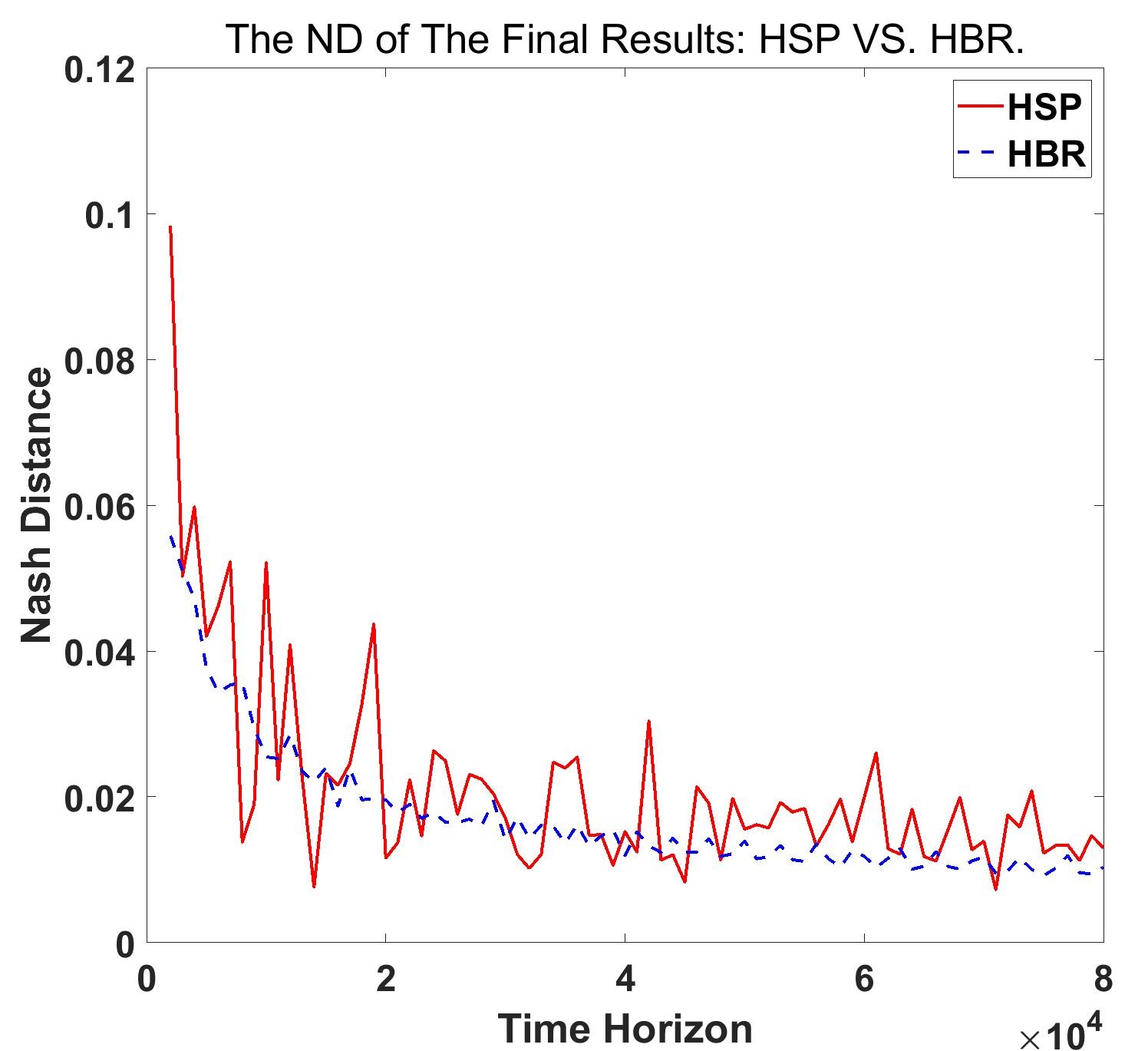}
    \caption{The $\operatorname{ND}$ of the output strategy profile for different time horizons.}
    \label{fig_ND_compare}
  \end{subfigure}
  \begin{subfigure}[b]{0.45\textwidth}
    \includegraphics[width=\textwidth]{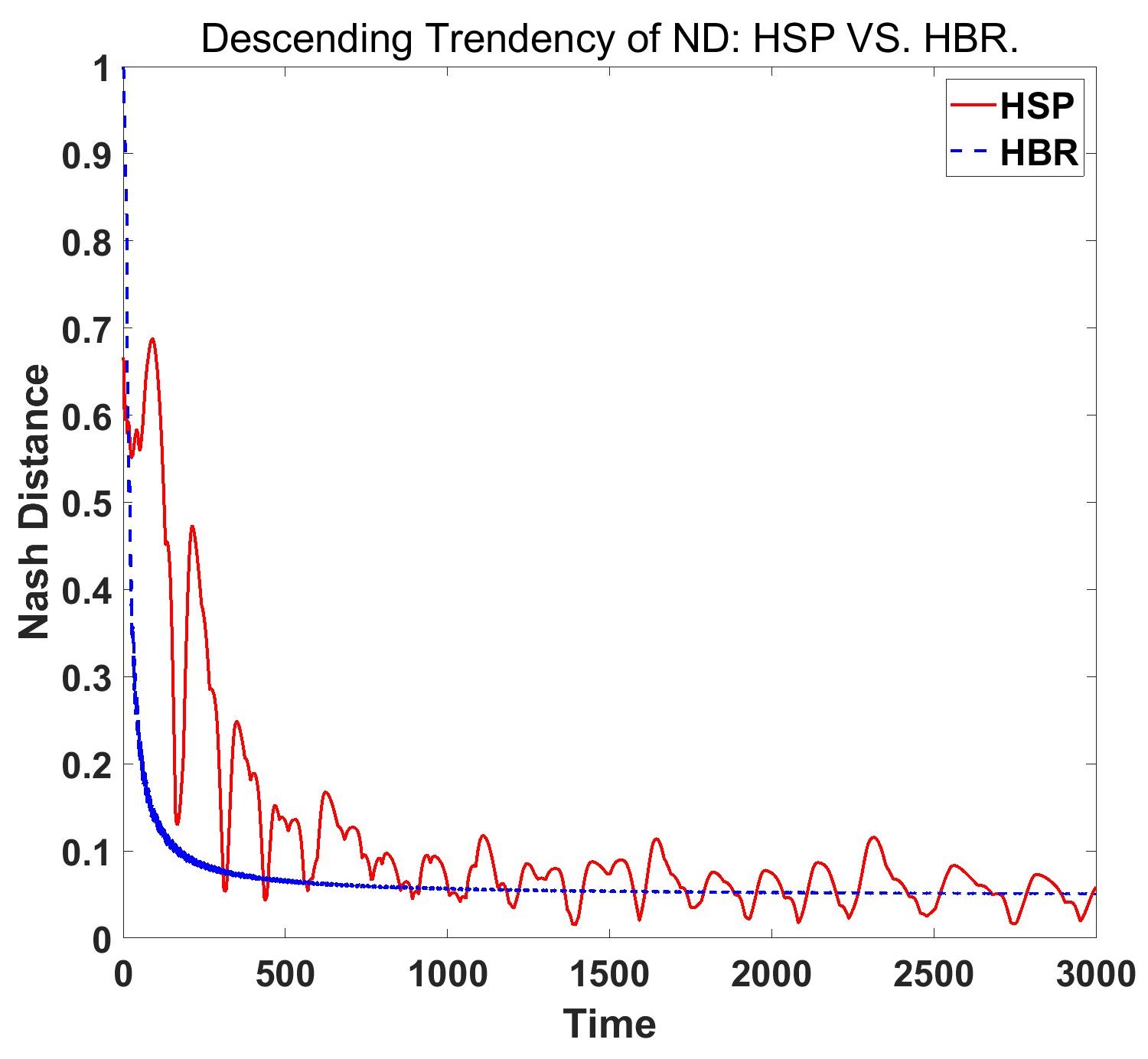}
    \caption{The descending trend for the $\operatorname{ND}$ of the time-averaged strategy profile.}
    \label{fig_ND_descending_trend}
  \end{subfigure}
  \caption{The comparison between HSP and HBR from the perspective of the convergence results and the convergence rate: game with an unique rational interior equilibrium.}
  \label{fig_HSPandHBR}
\end{figure}

\begin{figure}[htbp]
  \centering
  \begin{subfigure}[b]{0.45\textwidth}
    \includegraphics[width=\textwidth]{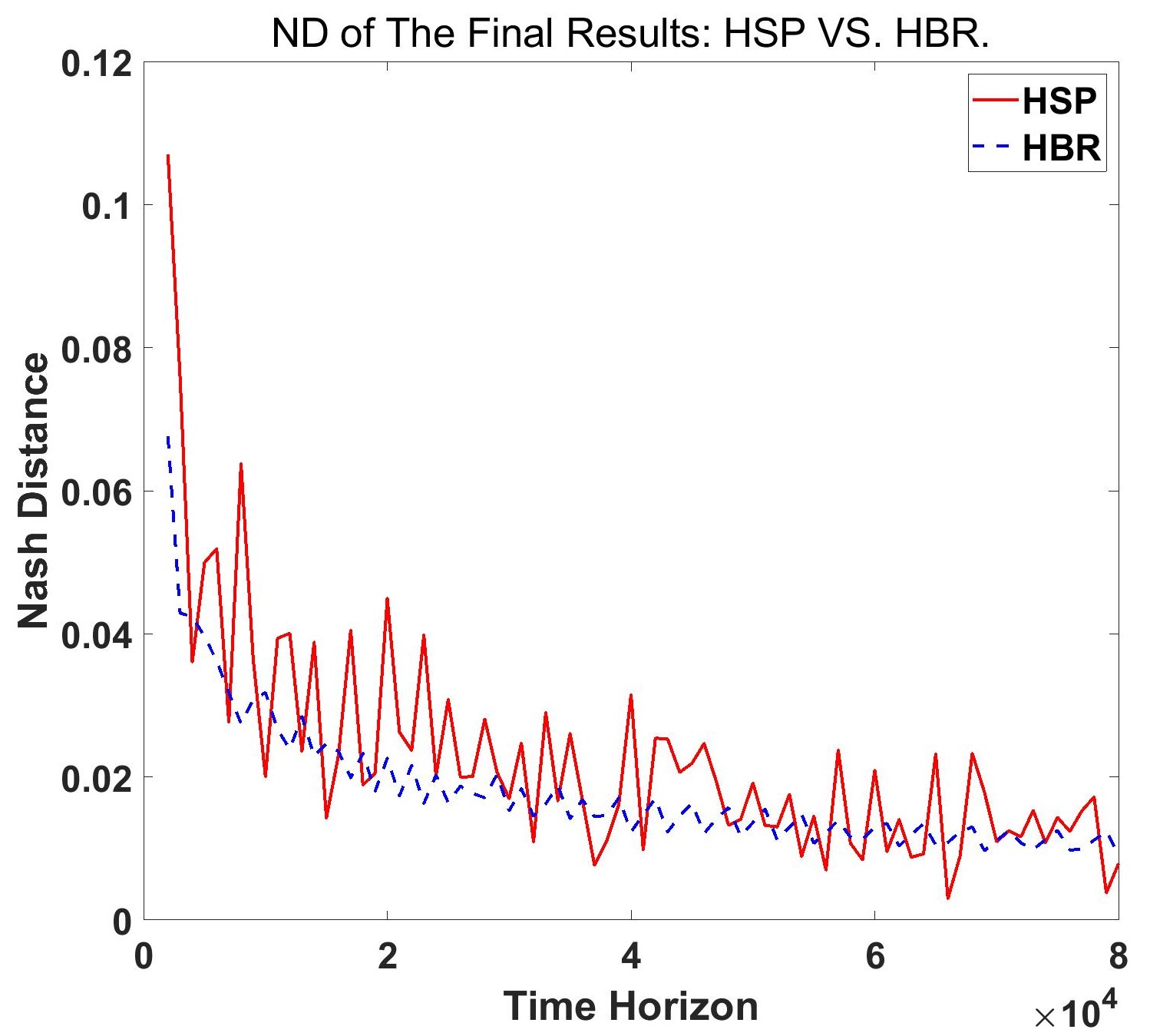}
    \caption{The $\operatorname{ND}$ of the output strategy profile for different time horizons.}
    \label{fig_ND_compare_1}
  \end{subfigure}
  \begin{subfigure}[b]{0.45\textwidth}
    \includegraphics[width=\textwidth]{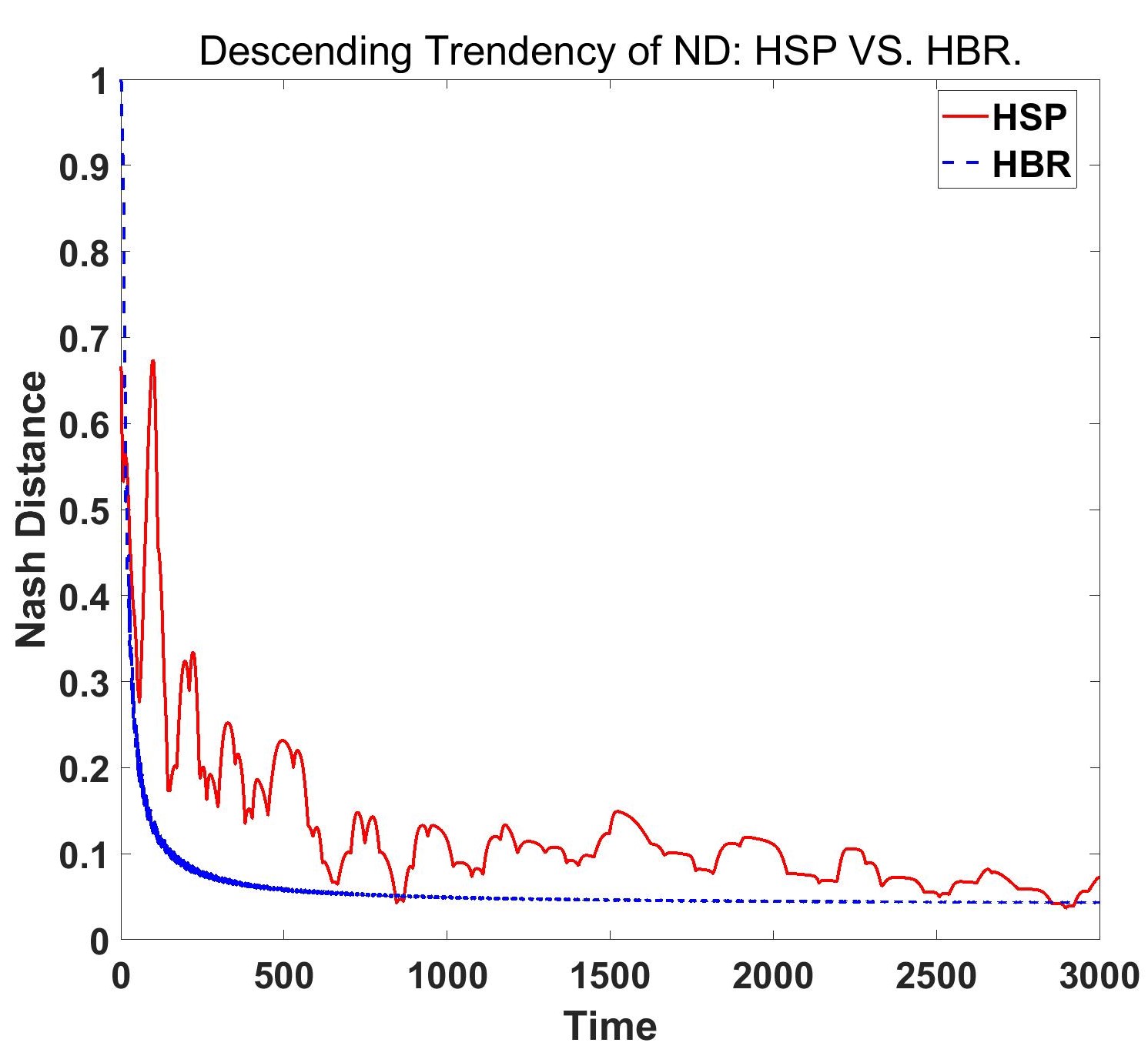}
    \caption{The descending trend for the $\operatorname{ND}$ of the time-averaged strategy profile.}
    \label{fig_ND_descending_trend_1}
  \end{subfigure}
  \caption{The comparison between HSP and HBR from the perspective of the convergence results and the convergence rate: game without rational interior equilibrium.}
  \label{fig_HSPandHBR_1}
\end{figure}

\section{Conclusion and Future Work}\label{sec_conclusion}

In this paper, we study the repeated game between one player using the Hedge algorithm and the other using the myopic best response. We prove that within this framework, when the payoff matrix is rational and the game has an interior NE, the dynamical game system enters a cycle after finite time. Moreover, within each period, the time-averaged strategy of the player using the myopic best response is an exact NE strategy. For the game with no cycle, the time-averaged strategy over the entire time horizon converges to the approximate NE. Based on these results, we propose the novel asymmetric HBR paradigm for NE-solving, which can save a substantial amount of computation costs and exhibit fast convergence rate and better stability.

In the future, we can explore the Hedge-myopic system for general games which have complex structures of NE. On the other hand, the periodicity of the Hedge-myopic system actually rules out the possibility of stage strategies converging to NE. Therefore, it is significant to 
consider how to modify the HBR paradigm so that the stage strategy can converge to NE strategy, i.e., the last-iterate property. We leave this also as future work. 

\section{Acknowledgement}

This work was supported by the National Key Research and Development Program of China under grant No.2022YFA1004600, the Natural 
Science Foundation of China under Grant T2293770, the Major Project on New Generation of Artificial Intelligence from the Ministry of Science and Technology (MOST) of China under Grant No. 2018AAA0101002, the Strategic Priority Research Program of Chinese Academy of Sciences under Grant No. XDA27000000.

\bibliographystyle{plainnat}
\bibliography{ms.bib}

\appendix

\section{Proof of Proposition \ref{Lem_interior}}\label{Adx_lem_interior}
\begin{proof}
    First, we claim that there does not exist $\textbf{x}\in \Delta(\mathcal{I})$ such that the payoffs obtained by the pure strategies of player Y are all greater than the game value $v^\ast$, i.e., for all $\textbf{x}\in \Delta(\mathcal{I})$, we have 
    \begin{equation*}
        \min\limits_{j\in \mathcal{J}} x^T A y^j \leq v^\ast.
    \end{equation*}
    If not, suppose that for $\textbf{x}_b \in \Delta(\mathcal{I})$, $\textbf{x}_b^T A \textbf{y}^j > v^\ast$ for all $j \in\mathcal{J}$. Then, for the NE strategy $\textbf{y}^\ast$ of player Y, since the equilibrium is interior, to play with $\textbf{y}^\ast$ of player Y, the loss of player X will be the same (which actually equals $v^\ast$) no matter which pure strategy is taken by player X. Then   
    we have
    \begin{align*}
        v^\ast = \textbf{x}_b^T A \textbf{y}^\ast = \sum\limits_{j=1}^n \textbf{y}_j^\ast \textbf{x}_b^T A \textbf{y}^j > \sum\limits_{j=1}^n \textbf{y}_j^\ast v^\ast = v^\ast,
    \end{align*}
    which leads to contradiction. 

    Next, we investigate the vertices of the region $Z_u$. 
    
    Note that the point corresponding to the NE strategy of player X is the unique solution of the $n$ systems of linear equations: $A_k \textbf{x} = \textbf{c},\ j = 1, 2, \ldots, n$, where the coefficient matrix $A_k$ is
    \begin{equation}
        \begin{pmatrix}
            a_{1,1} & a_{2,1} & \cdots & a_{n,1}\\
            \vdots & \ & \  & \vdots \\
            a_{1, k-1} & a_{2, k-1} & \cdots & a_{n, k-1} \\
            a_{1, k+1} & a_{2, k+1} & \cdots & a_{n, k+1} \\
            \vdots & \ & \  & \vdots \\
            a_{1, n} & a_{2, n} & \cdots & a_{n, n} \\
            1 & 1 & \cdots & 1
        \end{pmatrix},
    \end{equation}
    and $\textbf{c} = [v^\ast, v^\ast, \cdots, v^\ast, 1]^T$, whose first $n-1$ components are $v^\ast$ and the $n$-th component is $1$. 

    Now, consider the equation system $A_k \textbf{x} = \textbf{c} + \delta_c$, where $\delta_c = [\eta \delta^2/8, \eta \delta^2/8, \cdots, \eta \delta^2/8, 0]$, whose first $n-1$ components are $\eta \delta^2/8$ and the $n$-th component is $0$. By \textbf{Assumption 2}, $A_k$ is non-singular, the equation system admits an unique solution, denoted by $\textbf{x}^k$. 
    
    Firstly, we prove that $\textbf{x}^k$ belongs to the region $Z_u$ and is a vertex of $Z_u$. Since the strategy $\textbf{x}^k$ is the solution to the linear equation system $A_k \textbf{x} = c + \delta_c$, i.e., $(\textbf{x}^k)^T A \textbf{y}^j = v^\ast + \eta \delta^2/8$, thus  $(\textbf{x}^k)^T A \textbf{y}^j > v^\ast$ for $j \neq k$. By the arguments at the beginning of this proof,  we have $(\textbf{x}^k)^T A \textbf{y}^k < v^\ast \leq v^\ast + \eta \delta^2/8$. Combining these equations and inequality, we have $A^T \textbf{x}^k \leq \textbf{b}$, where $\textbf{b}$ is the vector defined in \textbf{Claim 2}. That proves  $\textbf{x}^k \in Z_u$. On the other hand, by the $n-1$ equations,  $\textbf{x}^k$ lies on the facets of the polyhedron $Z_u$, thus $\textbf{x}^k$ is a vertex of $Z_u$.

    Secondly, we prove that $\textbf{x}^k$ is an interior strategy for all $k$. Since $\textbf{x}^\ast$ is the solution of the linear equation system $A_k \textbf{x} = \textbf{c}$, it follows that $A_k (x^k - x^\ast) = \delta_c$ and thus,
    \begin{equation*}
        \| \textbf{x}^k - \textbf{x}^\ast \|_2 \leq \| A_k^{-1} \|_2 \| \delta_c \|_2 \leq \lambda_m \sqrt{n} \frac{\eta \delta^2}{8},
    \end{equation*}
    where $\lambda_m = \max\nolimits_{k=1, 2, \ldots, n}\| A_k^{-1} \|_2$. $\lambda_m$ is finite since $A_k$ is non-singular for all $k$.
    Since $\textbf{x}^\ast$ is an interior strategy and $\eta$ is small enough, we know that for any $k$, $\textbf{x}^k$ is also an interior point, i.e., $\exists\ \varepsilon^k >0 $, s.t. $x_i^k > \varepsilon^k, \ \forall\ i.$

    Lastly, let $\varepsilon_d = \min\nolimits_{k=1, 2, \ldots, n} \varepsilon^k.$ By Lemma \ref{Lem_polyhedra}, for all $\textbf{x}\in Z_u$, there exists $\lambda_1, \lambda_2, \cdots, \lambda_n$, satisfying $\lambda_k\geq 0$ and $\sum\nolimits_{k=1}^n \lambda_k = 1$, such that $\textbf{x} = \sum\nolimits_{k=1}^n \lambda_k \textbf{x}^k$. Then, for all $i = 1, 2, \ldots, n$, we have
    \begin{align*}
        x_i &= \sum\nolimits_{k=1}^n \lambda_k x_i^k > \sum\nolimits_{k=1}^n \lambda_k \varepsilon^k \geq \sum\nolimits_{k=1}^n \lambda_k \varepsilon_d = \varepsilon_d,
    \end{align*}
    which completes the proof.
\end{proof}

\section{Proof of Theorem \ref{Lem_finite_values}}\label{Appendix_C}

\begin{proof}
    First, define the set  
\begin{equation}\label{Def_Xepsilon}
    \mathcal{X}_\varepsilon = \{ \textbf{x}: \exists\ i\in \mathcal{I}\ \text{such that}\ x_i\leq \varepsilon\}.
\end{equation}
For $\textbf{x}\in \mathcal{X}_\varepsilon$, denote the index $i$ such that $x_i \leq \varepsilon$ to be $i(\textbf{x})$. The index $i(\textbf{x})$ might not be unique, but it does not affect the following discussion.

Note that for $\textbf{x} \in \mathcal{X}_\varepsilon$, $Q(\textbf{x})$ has a lower bound since 
\begin{align*}
    Q(\textbf{x}) &= -\sum\limits_{i=1}^n x_i^\ast \ln x_i \geq - x_{i(\textbf{x})}^\ast \ln x_{i(\textbf{x})}\\
    &  \geq - x_{i(\textbf{x})}^\ast \ln \varepsilon \geq  \min\limits_{i\in \mathcal{I}}\{x_i^\ast\} \ln \frac{1}{\varepsilon},
\end{align*}
where the first inequality holds because $-x_i^\ast \ln x_i>0$ for all $i\in\mathcal{I}$. 

By Lemma \ref{Lem_Qsequence_bounded}, we know that the Q-sequence $\{Q_t\}$ is bounded, i.e., $0 < Q_t \leq M_Q$ for all $t$. By the definition of $Q(\textbf{x})$, given $M_Q$, there must exist $\varepsilon_Q > 0$ such that $x_{i, t} > \varepsilon_Q$ for all $i\in \mathcal{I}$ and $t\geq 1$.


Recall the formula \eqref{Def_goodness} saying $ Q(\textbf{x}_t) = \ln \left( \sum\nolimits_{i=1}^n e^{\eta \sum_{\tau=1}^{t-1} (v^\ast - e_i^T A \textbf{y}_\tau)} \right).$ For $i = 1, 2, \cdots, n$, let $R_{i, t} = \sum\nolimits_{\tau=1}^{t-1} (v^\ast - e_i^T A \textbf{y}_\tau)$
and let $\mathbf{R}_t = (R_{1, t}, R_{2, t}, \cdots, R_{n, t})$. Then, the formula \eqref{Eq_hedge_formula} can be written as
\begin{equation}\label{Def_mapping_phi}
    x_{i, t} = \frac{\exp(-\eta \sum\limits_{\tau=1}^{t-1}e_i^TA\textbf{y}_{\tau})}{\sum\limits_{j=1}^n\exp(-\eta \sum\limits_{\tau=1}^{t-1}e_j^TA\textbf{y}_{\tau})} = \frac{\exp(\eta \sum\limits_{\tau=1}^{t-1}(v^\ast - e_i^TA\textbf{y}_{\tau}))}{\sum\limits_{j=1}^n\exp(\eta \sum\limits_{\tau=1}^{t-1}(v^\ast - e_j^TA\textbf{y}_{\tau}))} = \frac{\exp(\eta R_{i,t})}{\sum\limits_{j=1}^n\exp(\eta R_{j, t})},
\end{equation}
implying that for a given vector $\mathbf{R}_t$, only one strategy $\textbf{x}_t$ can be obtained. 
Denote the mapping from  $\mathbf{R}_t$ to $\textbf{x}_t$ by $\phi: \mathbb{R}^n \to \Delta(\mathcal{I})$. 
Since $\exp(\eta R_{i,t}) > 0$ for all $i$ and $t$, the range of the mapping $\phi$ is $\Delta(\mathcal{I}) \setminus \partial(\Delta(\mathcal{I}))$. 

Now consider the inverse mapping of $\phi$, i.e., $\phi^{-1}: \Delta(\mathcal{I}) \setminus \partial(\Delta(\mathcal{I})) \to \mathbf{R}_t.$ 

By the equation \eqref{Def_mapping_phi}, we have 
\begin{equation}\label{Eq_condition1}
    R_{i, t} - R_{n, t} = \frac{1}{\eta}(\ln x_{i, t} - \ln x_{n, t}).
\end{equation}
For $R_{1, t}, R_{2, t}, \cdots, R_{n, t}$, we also have 
\begin{equation}\label{Eq_condition2}
    \sum\limits_{i=1}^n x_i^\ast R_{i, t} = \sum\limits_{i=1}^n x_i^\ast \left(\sum\limits_{\tau=1}^{t-1} (v^\ast - e_i^T A \textbf{y}_\tau)\right) = \sum\limits_{\tau=1}^{t-1} \sum\limits_{i=1}^n x_i^\ast  (v^\ast - e_i^T A \textbf{y}_\tau) = \sum\limits_{\tau=1}^{t-1} (v^\ast - x^\ast A \textbf{y}_\tau) = 0
\end{equation}
since the game is zero-sum and $\textbf{x}^\ast$ is the NE strategy of player X. Combining the equation \eqref{Eq_condition1} and \eqref{Eq_condition2}, we obtain that 
\begin{equation}\label{Def_inverse_phi}
    R_{i, t} = \frac{1}{\eta} \ln x_{i, t} - \frac{1}{\eta} \sum_{i=1}^n x_i^\ast \ln x_{i, t} = \frac{1}{\eta} \ln x_{i, t} + \frac{1}{\eta} Q(\textbf{x}_t)
\end{equation}
which means that for a given vector $\textbf{x}_t$, only one regret vector $\mathbf{R}_t$ can be obtained.

Note that for all $t$, we have $\varepsilon_Q < x_{i, t} < 1$ for all $i$ and $0 < Q(\textbf{x}_t) \leq M_Q$. Hence, we have 
\begin{equation*}
    \frac{1}{\eta} \ln \varepsilon_Q \leq R_{i, t} \leq \frac{1}{\eta} M_Q
\end{equation*}
for all $i$ and $t$, which means that the possible values of $R_{i, t}$ in the Hedge-myopic system are in the range $[\frac{1}{\eta}\ln \varepsilon_Q, \frac{1}{\eta} M_Q].$ 

On the other hand, by the definition, $R_{i, t}=\sum\nolimits_{\tau=1}^{t-1} (v^\ast - e_i^T A \textbf{y}_\tau)$, in which $e_i^T A y_\tau$ refers to one of the elements of the matrix $A$, and $v^\ast$ is the game value, which is a fixed rational number. Hence, $R_{i, t}$ is also rational and  can only take values that are integer multiples of a fraction, rather than being able to take all rational numbers.
Combing with the boundness of $R_{i, t}$,  we can infer that $R_{i, t}$ can only take finite possible values. This implies that $\mathbf{R}_t$ can only take finite values and thus $x_t$ can only take finite values, which completes the proof.
\end{proof}

\begin{figure}[htbp]
    \centering
    \includegraphics[width=\textwidth]{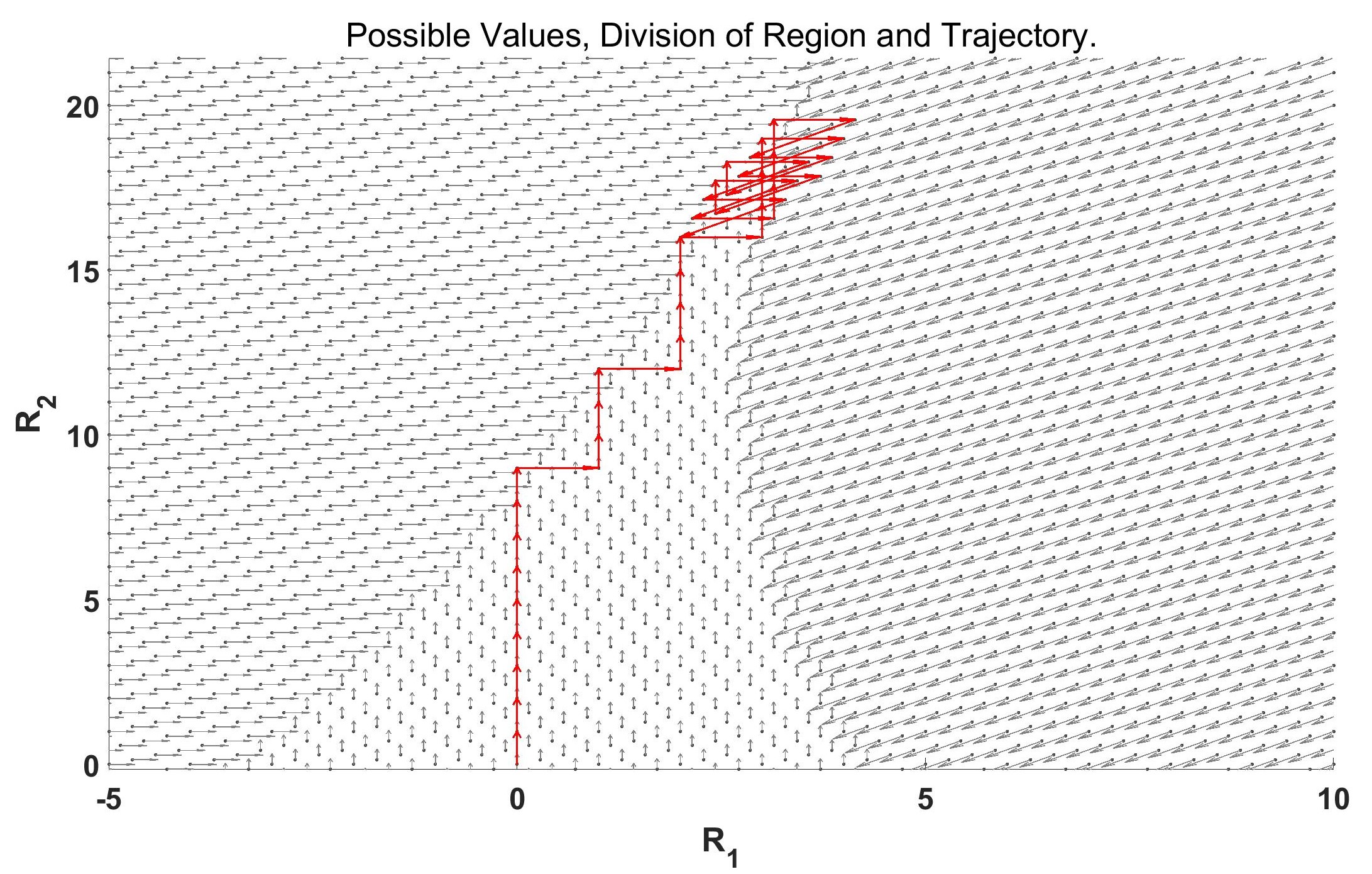}
    \caption{Possible values for the $\mathbf{R}_t$ vector for Example \ref{ex_theorem_explain}.}
    \label{fig_mesh_grid}
\end{figure}

The evolutionary direction of $\mathbf{R}_t$ in the Hedge-myopic system for a special $3\times 3$ game(Example \ref{ex_theorem_explain}) is shown in Figure \ref{fig_mesh_grid}. In this figure, each point represents a possible value of $\mathbf{R}_t$, and has an arrow pointing to another point, indicating the best response action of player Y when player X adopts the strategy corresponding to that point. The entire range of possible values for $\mathbf{R}_t$ is divided into three regions: below, left, and upper right. In each region, the update direction of $\mathbf{R}_t$ is opposite to the relative location of that region. For example, the points in the lower region point upwards. The red line represents the actual evolutionary path of the Hedge-myopic system.

\section{Proof of Lemma \ref{Lem_X_cycle_to_Y_cycle}}\label{Appendix_A}

\begin{proof}
    By the updating rule of $\textbf{y}_t$, we know that $\textbf{y}_t$ is fully determined by the value of $\textbf{x}_t$. Thus, it is natural that if the sequence $\textbf{x}_t$ is periodic, then the corresponding sequence of $\textbf{y}_t$ is also periodic.

    Suppose for some $t, T$, $x_{i, t+T} = x_{i, t}, \ \forall i = 1, 2, \cdots, n.$ By the formula \eqref{Eq_hedge_formula}, each element of $\textbf{x}_t$ is strictly positive, i.e., $x_{i, t} > 0,\ \forall i\in \mathcal{I}$. Then, for $i = 1, 2, \cdots, n-1$, we have
    \begin{align*}
        \frac{x_{i, t+T}}{x_{n, t+T}} &= \frac{x_{i, t}}{x_{n, t}} \\
        \Rightarrow \quad  \frac{\exp(-\eta \sum\limits_{\tau=1}^{t+T-1} e_i^T A\textbf{y}_\tau)}{\exp(-\eta \sum\limits_{\tau=1}^{t+T-1} e_n^T A\textbf{y}_\tau)} &= \frac{\exp(-\eta \sum\limits_{\tau=1}^{t-1} e_i^T A\textbf{y}_\tau)}{\exp(-\eta \sum\limits_{\tau=1}^{t-1} e_n^T A\textbf{y}_\tau)} \\
       \Rightarrow \quad \exp(-\eta \sum\limits_{\tau=t}^{t+T-1} e_i^T A\textbf{y}_\tau) &= \exp(-\eta \sum\limits_{\tau=t}^{t+T-1} e_n^T A\textbf{y}_\tau) \\
       \Rightarrow \quad\quad\quad \quad \sum\limits_{\tau=t}^{t+T-1} e_i^T A\textbf{y}_\tau &= \sum\limits_{\tau=t}^{t+T-1} e_n^T A\textbf{y}_\tau \\
      \Rightarrow \quad  e_i^T A \left(\frac{1}{T} \sum\limits_{\tau=t}^{t+T-1}\textbf{y}_\tau \right) &= e_n^T A \left(\frac{1}{T} \sum\limits_{\tau=t}^{t+T-1}\textbf{y}_\tau \right) .
    \end{align*}
    Since the game admits an unique rational interior equilibrium, the above equations implies that $\frac{1}{T} \sum\nolimits_{\tau=t}^{t+T-1}\textbf{y}_\tau $ is the NE strategy of player Y, i.e.     
    $\frac{1}{T} \sum\nolimits_{\tau=t}^{t+T-1}\textbf{y}_\tau = \textbf{y}^\ast$. 
\end{proof}

\end{document}